\definecolor{red}{rgb}{1,0,0}
\definecolor{blue}{rgb}{0,0,1}
\definecolor{green}{rgb}{0,.6,0}
\colorlet{outline}{black}
\colorlet{pmu}{gray}
\newcommand{\lb}{\left\{}
\newcommand{\rb}{\right\}}
\newcommand{\lc}{\left\lceil}
\newcommand{\rc}{\right\rceil}
\newcommand{\lp}{\left (}
\newcommand{\rp}{\right )}
\newcommand{\ls}{\left [}
\newcommand{\rs}{\right ]}
\newcommand{\ben}{\begin{enumerate}}
\newcommand{\een}{\end{enumerate}}
\newcommand{\noi}{\noindent}
\newcommand{\rad}{\operatorname{rad}}
\newcommand{\udot}{\dot{\cup}}
\newcommand{\dv}[1]{\frac{\mathrm{d}}{\mathrm{d}#1}}
\newcommand{\Sx}{S^\times}
\newcommand{\Ss}{S^*}
\newcommand{\F}{\mathcal{F}}
\newcommand{\Z}{\mathbb{Z}}
\newtheorem{thm}{Theorem}[section]
\newtheorem{cor}[thm]{Corollary}
\newtheorem{lem}[thm]{Lemma}
\newtheorem{prop}[thm]{Proposition}
\newtheorem{obs}[thm]{Observation}
\theoremstyle{definition}
\newtheorem{rem}[thm]{Remark}
\newtheorem{remark}[thm]{Remark}
\theoremstyle{definition}
\theoremstyle{definition}
\newtheorem{ex}[thm]{Example}
\newcommand{\psdzf}{PSD zero forcing }
\newcommand{\psd}{PSD }
\newcommand{\psdset}{PSD forcing set }
\newcommand{\psdth}{\operatorname{th_+}}
\newcommand{\psdthx}{\operatorname{th_+^\times}}
\newcommand{\psdthxs}[2]{\psdthx\lp#1;#2\rp}
\newcommand{\psdths}{\operatorname{th_+^*}}
\newcommand{\psdthss}[2]{\psdths\lp#1;#2\rp}
\newcommand{\psdpt}{\operatorname{pt_+}}
\newcommand{\psdpts}[2]{\psdpt\lp#1;#2\rp}
\newcommand{\psdz}{\operatorname{Z_+}}
\DeclareMathOperator{\rd}{rd}
\newcommand{\cop}{c}
\newcommand{\capt}{{\mathrm{capt}}}
\newcommand{\thcx}{\operatorname{th}_c^\times}
\newcommand{\thcs}{\operatorname{th}_c^*}
\title{Positive Semidefinite Initial Cost Product Throttling}
\author{Esther Conrad \thanks{Department of Mathematics, Iowa State University, Ames, IA 50011 (edconrad@iastate.edu)}}
\begin{document}
\maketitle

\begin{abstract} 
Product throttling answers the question of minimizing the product of the resources needed
to accomplish a task, and the time in which it takes to accomplish the task. In product throttling for positive semidefinite zero forcing,
task that we wish to accomplish is positive semidefinite zero forcing.
Positive semidefinite zero forcing is a game played on a graph $G$ that starts with a coloring
of the vertices as white and blue. At each step any vertex colored blue with a unique white
neighbor in a component of the graph formed by deleting the blue vertices from $G$ \textit{forces}
the color of the white neighbor to become blue. We give various results and bounds on the initial cost
product throttling number, including a lower bound of $1+\rad(G)$ and the initial cost product throttling number of a cycle. We also include a table with results on the initial cost and no initial cost product throttling number for various graph families. 
\end{abstract}

     \noi {\bf Keywords} positive semidefinite, zero forcing, throttling, propagation time
    
     \noi{\bf AMS subject classification} 05C69, 05C57, 05C85, 68R10, 05C50

\section{Introduction and Preliminaries}\label{sec:preliminaries}
Positive semidefinite zero forcing is a process on a graph where vertices have two possible colors - blue and white. The goal is to find an initial set of blue vertices so that at the end of the process, which is to repeat a PSD color change rule, all the vertices in the graph are blue. In 2013, Butler and Young began the study of \textit{(sum) throttling} in zero forcing \cite{butler2013throttling}, which is the problem of minimizing the sum of the number of blue vertices in a zero forcing set and the time it takes the set to color the entire graph blue. In 2019, Carlson et al. extended (sum) throttling to the PSD color change rule \cite{carlson2019throttling}.

In \cite{bonato2022optimizing}, Bonato et al. extended throttling to initial cost product throttling for Cops and Robbers, and in \cite{anderson2021product} Anderson et al. introduced the the terminology \textit{initial cost product throttling} and \textit{no initial cost product throttling} and extended these definitions to various other graph parameters including positive semidefinite zero forcing. Initial cost and no initial cost product throttling measure the relationship between the cost of initial blue vertices and the time it takes to color the entire graph via minimizing their product. The idea behind initial cost product throttling is to include the cost of placing the blue vertices as an extra time step. For example, the cost of placing a cop at an intersection. So, initial cost product throttling is the problem of minimizing the product of the size of the initial set of blue vertices and one plus the propagation time of this initial set. No initial cost product throttling on the other hand, does not take into account the cost of placement. So, the no initial cost product throttling problem is to minimize the product of the initial set and the propagation time of the initial set, but the solution that all vertices be blue initially is excluded. 

This paper extends the results for PSD product throttling in \cite{anderson2021product}, focusing primarily on initial cost product throttling.
In this section, we define PSD zero forcing, PSD propagation time, and PSD product and sum throttling, and present various basic graph definitions. In Section \ref{sec:general_bounds}  we present some general bounds for initial cost and no initial cost PSD product throttling including $\rad(G)+1$ as a lower bound for the initial cost PSD product throttling for any graph $G$. In Section \ref{sec:graph_operations}, we present some bounds for $G\Box H$ and edge operations. In Section \ref{sec:specialgraphs}, we establish the value of the product throttling number for a cycle and present a table with results for various families of graphs. 

\subsection{Positive semidefinite zero forcing, propagation, and throttling}
We begin by defining the \textit{positive semidefinite zero forcing color change rule} as in \cite{barioli2010zero}: 
Let $G$ be a graph and $S$ a set consisting of blue vertices. Let $W_1,\ldots,W_k$ be the sets of vertices of the components of $G - S$. Let $w\in W_i$. If $u\in S$ and $w$ is the only white neighbor of $u$ in $G[W_i\cup S]$, then change the color of $w$ to blue. 

Repeatedly applying the \psdzf color change rule until no more color changes can be made is called the \textit{\psdzf process}. The set of blue vertices at the end of the \psdzf process is called \textit{the derived set}. If the derived set is $V(G)$, the initial set $S$ is called a \textit{positive semidefinite zero forcing set} or a \textit{PSD forcing set}. The \textit{\psdzf number of $G$}, denoted $\psdz(G)$, is the minimum cardinality over $S$ such that $S$ is a \psdset of $G$.

Each time the color change rule is applied to all the currently blue vertices in a graph is a \textit{round}. If a white vertex $u$, is the unique white neighbor of a blue vertex $v$ in one component of $G-S$, we say that \textit{$v$ can force $u$} and $u$ turns blue in this round.
If $S\subseteq V(G)$, define $S^{[i]}$ to be the vertices that are blue after round $i$ of the \psdzf color change rule and $S^{(i)}$ to be the set of vertices that were changed to blue at the $i^{th}$ iteration of the \psdzf color change rule. That is 
\begin{align*}
    &S^{(0)} = S^{[0]} = S. \\
    &\text{For }i\geq 1,\\
    &\hspace{0.5cm}S^{(i)} = \left\{ w\in V(G)\setminus S^{[i-1]} : w \text{ can be forced by some $v$ given $S^{[i-1]}$}\right\}.\\
    &\hspace{0.5cm}S^{[i]} = S^{[i-1]}\cup S^{(i)}.
\end{align*}
For each $v\in V\lp G \rp $ define the \textit{round function}, $\rd\lp v \rp $, to be number of the round in which vertex $v$ is first colored blue. That is, $\rd\lp v \rp  = k$ for $v \in {S^{(k)}}$.

If a vertex $v$ is used to change the color of a vertex $u$ via the PSD zero forcing rule, we say that $v$ forces $u$, denoted as $v\to u$. Note that there may be more than one vertex that can force $u$, but only one vertex is chosen to force $u$. Let $S$ be a zero forcing set and construct the derived set, and let  the \textit{set of forces} be $\mathcal{F} = \{u\to v: \text{the force $u\to v$ was used to construct the derived set}\}$. Note that in order to construct the derived set, there is often a choice regarding which vertex will force a particular vertex. So a set of forces is not necessarily unique. 

Let $S$ be a PSD forcing set and $\F$ a set of forces for $S$, and let $E(\F) = \{uv: u \to v\in \F\}$. Then $\mathcal{T} = (V(G), E(\F))$ is a union of trees. For every vertex $v\in S$, $\mathcal{T}$ contains a tree $T_v$ such that $v\in T_v$, and for any distinct vertices $u,v\in S$, $V(T_v)\cap V(T_u) = \emptyset.$ A vertex $v\in S$ is the root of a tree $T_v$. The graph $\mathcal{T}$ is called a \textit{PSD forcing tree cover}.

In \cite{warnberg2016positive}, Warnberg defined the \textit{positive semidefinite propagation time} of $S$ in $G$ as the smallest $p$ such that $S^{[p]} = V(G)$, and denoted it as $\psdpts{G}{S}$. If $S$ is not a PSD forcing set, then $\psdpts{G}{S}= \infty$. For $k\in \Z^+$, $\psdpt(G,k) = \min\{\psdpts{G}{S}: |S| = k\}$ and \textit{the \psd propagation time of $G$} is $\psdpt(G) = \psdpt(G,\psdz(G)).$

 Positive semidefinite throttling was introduced in \cite{carlson2019throttling} as an extension to throttling for zero forcing \cite{butler2013throttling}. Let $S$ be an initial set of blue vertices. Define the \textit{positive semidefinite throttling number of $S$ in $G$ } as $\psdth(G;S) = |S| + \psdpts{G}{S}$. For $k\in \Z^+$, $\psdth(G,k) = \min\{\psdth(G;S): |S| = k\}$ and the \textit{positive semidefinite throttling number of $G$} is $\psdth(G) = \min\{\psdth(G,k): k\geq \psdz(G)\}.$ The \textit{initial cost positive semidefinite product throttling number of $S$ in $G$ } is $\psdthxs{G}{S} = |S|(1+\psdpts{G}{S})$. For $k\in \Z^+$, $\psdthx(G,k) = \min\{\psdthxs{G}{S}: |S| = k\}$ and the \textit{initial cost positive semidefinite product throttling number of $G$} is 
 $$\psdthx(G) = \min\{\psdthx(G,k): k\geq \psdz(G)\}.$$ 
 
 The \textit{no initial cost positive semidefinite product throttling number of $S$ in $G$ } is $\psdthss{G}{S} = |S|(\psdpts{G}{S})$. For $k\in\Z^+$, $\psdths(G,k) = \min\{\psdthss{G}{S}: |S| = k\}$ and the \textit{no initial cost positive semidefinite product throttling number of $G$} is $\psdths(G) = \min\{\psdths(G,k): \psdz(G)\leq k< V(G)\}$. 

\textit{Cops and Robbers} is a two player game on a graph. The first player places and moves a collection of cops and the second player places and moves a single robber. The players exchange turns when moving the cops and robber. The goal of the first player is for a cop to capture the robber by occupying the same vertex and the goal of the second player is for the robber to evade capture. At each round, each cop is allowed to move to an adjacent vertex or stay in the same place, followed by the robber who is allowed to move to an adjacent vertex or stay in the same place. The \textit{cop number} of the a graph $G$ is the minimum number of cops needed to catch a robber and is denoted $\cop(G)$. The \textit{capture time} of $G$ of an initial set $S$ is the minimum number of rounds it takes cops placed on the vertices in $S$ to capture a robber assuming the players are using optimal strategies and is denoted $\capt(G;S)$. If the set $S$ cannot capture the robber then $\capt(G;S) = \infty$ \cite{copsandrobbers}. The $k$-capture time,
$\capt_k(G) = \min\{\capt(G;S) : |S| =k\}$. Let $G$ be a graph. The initial cost product throttling for cops and robbers of an initial set $S$ is $\thcx\lp  G;S\rp = |S|(1+\capt(G;S))$. The initial cost product throttling number of $G$ for cops and robbers is $\thcx\lp  G\rp = \min\{k(1+\capt_k(G)) : k \geq \cop(G) \}$.

The next two results are used for examples throughout this paper.
 \begin{thm}{\textrm{\cite{anderson2021product}}}\thlabel{prop:cops_tree_cycle} Let $G$ be a tree or cycle. Then $\psdpts{G}{S} = \capt\lp  G;S\rp$. Consequently, for any $S\subseteq V\lp  G\rp$, $\psdthxs{G}{S} = \thcx\lp  G;S\rp$ and $\psdthss{G}{S} = \thcs\lp  G;S\rp$, and thus $\psdthx\lp  G\rp = \thcx\lp  G\rp$ and $\psdths\lp  G\rp = \thcs\lp  G\rp$.
 \end{thm}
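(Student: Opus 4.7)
The plan is to reduce the statement to proving the core identity $\psdpts{G}{S} = \capt(G;S)$ for every $S\subseteq V(G)$ when $G$ is a tree or cycle; the remaining identities then follow immediately because $\psdthxs{G}{S} = |S|\lp 1+\psdpts{G}{S}\rp$ and $\thcx(G;S) = |S|\lp 1+\capt(G;S)\rp$ (and analogously for the no-initial-cost variant), so equality of the time parameters lifts to equality of the throttling parameters after taking the minimum over $S$. If $S$ is not a PSD forcing set both quantities are declared infinite, and in both the tree and cycle cases one verifies that cops on $S$ also fail (most dramatically, a single cop on a cycle cannot catch a robber who always stays antipodal), so only the finite case requires real work.

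For a tree $T$, I would exploit the fact that removing any vertex $u$ separates each pair of its neighbors into distinct components. Hence whenever $u\in S$ has a white neighbor $v$ in a component $W$ of $T-S$, $v$ is $u$'s unique neighbor in $W$, so $u$ forces $v$. Consequently every round each blue vertex forces all of its white neighbors, giving $S^{[t]} = \{v : d(v,S)\le t\}$ and $\psdpts{T}{S} = \max_{v\in V(T)} d(v,S)$. For the cop side I would argue by induction on the height of the subtree into which the robber retreats: acyclicity prevents the robber from looping around the closest cop, so that cop's shortest-path pursuit captures in exactly $d(s,v)+h$ rounds, where $h$ is the remaining depth of the subtree rooted at $v$ away from $s$. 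Optimizing the robber's starting vertex (the worst choice is a farthest leaf from $S$) collapses this to $\capt(T;S) = \max_{v\in V(T)} d(v,S)$, matching propagation.

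For a cycle $C_n$ with $|S|\ge 2$, removing $S$ partitions the white vertices into arcs of sizes $a_1,\ldots,a_{|S|}$, and each vertex of $S$ has exactly one neighbor in each of its two adjacent arcs (this is where $|S|\ge 2$ is used: otherwise the two neighbors would lie in the same wrap-around arc). Hence each arc is eaten from both ends simultaneously, becoming entirely blue after $\lceil a_i/2\rceil$ rounds, giving $\psdpts{C_n}{S} = \max_i \lceil a_i/2\rceil$. The symmetric cop argument has the two vertices of $S$ bordering the longest arc walk toward one another, catching the robber in exactly $\lceil a_i/2\rceil$ rounds, so $\capt(C_n;S) = \max_i \lceil a_i/2\rceil$ as well; the case $|S|\le 1$ is covered by the infinite convention. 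The main obstacle throughout is the cop-robber side on trees, since cop-robber dynamics are not a monotone set expansion like PSD forcing; pinning down that the robber's optimal play really is a linear retreat into an acyclic subtree is exactly where acyclicity of $T$ is essential, and once this is secured the throttling identities and the final global equalities $\psdthx(G) = \thcx(G)$ and $\psdths(G) = \thcs(G)$ are algebraic bookkeeping.
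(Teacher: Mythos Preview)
The paper does not prove this theorem; it is quoted verbatim from \cite{anderson2021product} and used as a black box. There is therefore no ``paper's own proof'' against which to compare your proposal.

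That said, your sketch is essentially the standard argument and is sound for trees and for cycles with $|S|\ge 2$: in both settings the blue set after $t$ rounds is exactly $\{v: d(v,S)\le t\}$, and the optimal cop strategy is shortest-path pursuit while the optimal robber strategy is to start at a vertex maximizing $d(v,S)$ and retreat along the unique remaining direction. One small gap: your claim that ``the case $|S|\le 1$ is covered by the infinite convention'' is not quite right for $C_3$. There $\psdz(C_3)=2$, so a single blue vertex has both white neighbors in the same component of $C_3-S$ and cannot force, giving $\psdpt(C_3;S)=\infty$; yet $\cop(C_3)=1$ and $\capt(C_3;\{v\})=1$. The statement as written in this paper would thus fail for $C_3$ with $|S|=1$; presumably the version in \cite{anderson2021product} either restricts to cycles of length at least four or to sets $S$ that are PSD forcing sets. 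You should flag this edge case explicitly rather than sweep it into the infinite convention.
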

 \begin{cor}{\rm{\cite{anderson2021product, bonato2022optimizing}}}\thlabel{cor:trees} Let $T$ be a tree. Then $\psdthx\lp  T\rp = 1 +\rad\lp  T\rp$. In particular, $\psdthx\lp  P_n\rp = 1 + \lc\frac{n-1}{2}\rc$.
 \end{cor}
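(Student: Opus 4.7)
The plan is to apply Theorem~\thref{prop:cops_tree_cycle}, which reduces the claim to $\thcx\lp T\rp = 1 + \rad\lp T\rp$; the $P_n$ formula then follows immediately from the well-known identity $\rad\lp P_n\rp = \lc \frac{n-1}{2}\rc$.

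For the upper bound, I would place a single cop at a central vertex $v^* \in V\lp T\rp$ having eccentricity $\rad\lp T\rp$. Trees are cop-win, and on a tree the cop's canonical strategy of stepping along the unique geodesic toward the robber forces the cop--robber distance to strictly decrease each round, because any move by the robber either decreases the distance or leaves the robber in a strict subtree on the robber's side of the cop (and the robber has no cycle along which to retreat). Hence capture occurs in at most $\rad\lp T\rp$ rounds, yielding $\thcx\lp T\rp \leq 1 \cdot \lp 1 + \rad\lp T\rp\rp$.

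For the lower bound $\thcx\lp T\rp \geq 1 + \rad\lp T\rp$, I would show $k\lp 1 + \capt_k\lp T\rp \rp \geq 1 + \rad\lp T\rp$ for every $k \geq \cop\lp T\rp = 1$. The case $k \geq 1 + \rad\lp T\rp$ is immediate. Otherwise, for any $k$-cop placement $S$, the robber selects a starting vertex $u^*$ maximizing $d^* := \min_{s \in S} d\lp u^*, s\rp$. Since the cop--robber distance decreases by at most two per round (cop moves toward, robber moves toward), $\capt\lp T; S\rp \geq \lc d^*/2\rc$. The crux is then the covering-style inequality $d^* \geq 2\lp \rad\lp T\rp + 1 - k\rp/k$, which combined with $|S| = k$ yields $k\lp 1 + \capt_k\lp T\rp \rp \geq 1 + \rad\lp T\rp$.

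The main obstacle is this covering inequality, whose proof is the heart of the lower bound. My approach would be to fix a diameter-achieving path $P^*$ in $T$ and project the $k$ cops onto $P^*$; the projections partition $P^*$ into at most $k+1$ subpaths, and by pigeonhole one subpath has length at least $\lc \lp \mathrm{diam}\lp T\rp - 2\lp k - 1\rp \rp / \lp 2k\rp \rc$ after accounting for the cops occupying some endpoints. Placing the robber at the midpoint of this longest subpath---and using $\rad\lp T\rp \leq \mathrm{diam}\lp T\rp$ together with careful bookkeeping when a cop sits off $P^*$ (projecting orthogonally along the unique geodesic to $P^*$)---should establish the required lower bound on $d^*$ and complete the proof.
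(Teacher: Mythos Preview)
In the paper this corollary is not proved but simply recorded as a consequence of \thref{prop:cops_tree_cycle} together with the result $\thcx(T)=1+\rad(T)$ from the cited references. The paper does, however, later give an independent argument for the lower bound $\psdthx(G)\ge 1+\rad(G)$, valid for \emph{every} connected graph, via \thref{lem:PSDrad} and \thref{thm:PSDrad}: one takes a PSD forcing tree cover $\mathcal{T}$, embeds it in a spanning tree, contracts each $T\in\mathcal{T}$ to a point, and bounds $\rad(G)$ by $|S|-1+|S|\max_{T\in\mathcal{T}}\rad(T)\le |S|(1+\psdpt(G;S))-1$. That route avoids cops and robbers entirely and is both shorter and more general than the pigeonhole-on-a-diameter-path argument you propose.

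Your upper bound is fine. The lower bound, however, contains a genuine error: the inequality $d^*\ge 2(\rad(T)+1-k)/k$ that you call ``the crux'' is false. Already for $T=P_9$ with $k=1$ one has $\rad(T)=4$ and $d^*=4$, while your right-hand side equals $8$. The mistake originates upstream: you bound $\capt(T;S)\ge\lceil d^*/2\rceil$ by letting the robber step \emph{toward} the cop, but the robber is adversarial and on a tree may simply stay put, so in fact $\capt(T;S)\ge d^*$. With this correct bound you only need $k(1+d^*)\ge 1+\rad(T)$, and then your ball-covering idea on a diametrical path (each cop covers at most $2d^*+1$ of its vertices, giving $k(2d^*+1)\ge \mathrm{diam}(T)+1$) together with $\mathrm{diam}(T)\ge 2\rad(T)-1$ does yield the desired inequality after routine integer bookkeeping. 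So the strategy is salvageable, but as written the key inequality fails.
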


\subsection{Graph theory}
    A \textit{graph} $G = (V,E)$ is a set of \textit{vertices}, $V = V(G)$ and a set of \textit{edges}, $E = E(G)$. An edge has the form $\{u,v\}$, for some $u\neq v, \in V(G)$. The edge $\{u,v\}$ is often written as $uv$. For vertices $u,v$ in a graph, a $u-v$ \textit{path} is a sequence of vertices and edges $u = v_1,e_1,\ldots,v_\ell,e_\ell,v_{\ell+1}=v$. Each vertex $v_i$ is distinct and $e_i = \{v_i,v_{i+1}\}$ for each $i\leq \ell$. A \textit{cycle} is a sequence of vertices and edges $u = v_1,e_1,\ldots,v_\ell,e_\ell,v_{\ell+1}=u$. Each vertex $v_i\neq v_j$ for $1\leq i,j\leq \ell$, and $e_i = \{v_i,v_{i+1}\}$ for each $i\leq \ell$. The number of edges in a path or cycle is the \textit{length} of the path or cycle. A graph $G$ is \textit{connected} if there is a path from any vertex to any other vertex. The \textit{distance} between the vertices $u$ and $v$ is the minimum length of a $u-v$ path, denoted $d(u,v)$. The \textit{eccentricity} of a vertex $v$ is $e(v) = \max\{d(u,v): u \in V(G) \setminus \{v\}\}.$ The \textit{radius of $G$} is $\rad{G} = \min\{e(v):v\in V(G)\}$. Given a set $S$ of vertices and a vertex $u$, define the \textit{distance between $S$ and $u$} as $d(S,u)= \min\{d(v,u):v\in S\}$, and define the \textit{eccentricity of $S$} to be $e(S) = \max\{d(S,u): u \in V\}$. The \textit{$k$-radius} is $\rad_k(G) = \min\{e(S): |S| = k$ and $S\subseteq V\}$.

 We say that vertices $u$ and $v$ are \textit{neighbors} if $uv \in E$. The \textit{neighborhood} of a vertex $v$, denoted $N(v)$ is the set of the neighbors of $v$. The \textit{closed neighborhood} of $v$ is $N[v] = N(v)\cup\{v\}$. The \emph{degree} of $v$ is the number of neighbors of $v$, denoted $\deg{v}{}$. When the graph $G$ is not clear from context, we use $\deg{v}{G}$ and $N_G(v)$ as needed. The\textit{ maximum degree} of a graph is $\Delta(G) = \max_{v\in V} \deg{v}{}$. A set of vertices $S$ is an \textit{independent set} if for any $u,v \in S$, $uv \not\in E(G)$. We denote the cardinality of the smallest independent set by $\alpha(G)$.
 
 A \textit{subgraph $H$} of a graph $G$ is a graph such that $V(H) \subseteq V(G)$ and $E(H) \subseteq E(G).$ An \textit{induced subgraph} $H$ of $G$ is a subgraph of $G$ such that $E(H) = \{uv: u,v\in V(H)$ and $uv \in E(G)\}$. If $W\subseteq V(G)$, $G[W]$ denotes the induced subgraph with vertex set $W$.

 A \textit{tree} is a connected graph with no cycles. A \textit{complete bipartite graph} $K_{a,b}$ is the graph with vertex set $V = X\cup Y$ such that $|X|=a$, $|Y| = b$, $X\cap Y = \emptyset$, and the edge set is $E = \{xy: x\in X, y\in Y\}$. 
 
 We refer the reader to \textit{Graph Theory} by Diestel \cite{Diestelbook} as a reference for any additional graph terminology.

\section{General Bounds}\label{sec:general_bounds}
We begin by citing some general statements from \cite{anderson2021product}.
\begin{obs}{\rm{\cite{anderson2021product}}} \thlabel{obs:PSDextreme}
Let $G$ be a graph of order $n$.
\begin{enumerate}
    \item If $\psdz(G) \geq \frac{n}{2}$, then $\psdthx(G) = n$.
    \item $\psdthx(G) \geq th_c^\times(G)$.
\end{enumerate}
\end{obs}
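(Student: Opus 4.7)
The plan is to handle the two parts independently, as they rely on very different ideas.

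For part (1), I would use a direct two-case argument driven by the definition of $\psdthx$. The upper bound $\psdthx(G) \leq n$ is immediate by choosing $S = V(G)$, for which $\psdpts{G}{S} = 0$ and hence $\psdthxs{G}{S} = n(1+0) = n$. For the matching lower bound, let $S$ be any PSD forcing set with $|S| = k \geq \psdz(G) \geq n/2$. If $k = n$ then $\psdthxs{G}{S} \geq n$ trivially. Otherwise $k < n$ forces $\psdpts{G}{S} \geq 1$, giving
\[
\psdthxs{G}{S} = k(1+\psdpts{G}{S}) \geq 2k \geq n.
\]
Minimizing over $k \geq \psdz(G)$ then yields $\psdthx(G) \geq n$, so $\psdthx(G) = n$.

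For part (2), the plan is to compare any PSD forcing set $S$ to the cop placement that sits on the vertices of $S$. Assuming the inequality $\capt(G;S) \leq \psdpts{G}{S}$ for every PSD forcing set $S$ (which in particular implies $\cop(G) \leq |S|$), let $S$ realize $\psdthx(G)$ with $|S| = k$. Then $k \geq \cop(G)$, and chaining bounds gives
\[
\thcx(G) \leq k(1+\capt_k(G)) \leq k(1+\capt(G;S)) \leq k(1+\psdpts{G}{S}) = \psdthx(G).
\]

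The main obstacle is justifying the cop-capture inequality $\capt(G;S) \leq \psdpts{G}{S}$. I would prove this by fixing a PSD forcing tree cover $\mathcal{T}$ with roots $S$, placing one cop on each root $v$, and describing the following pursuit: in round $i$, the cop assigned to $T_v$ advances one edge deeper into $T_v$, always selecting the unique child consistent with the component of $G - S^{[i-1]}$ currently containing the robber. The subtlety over standard zero forcing is that in PSD forcing a single blue vertex can simultaneously force white neighbors lying in different components, so $T_v$ may branch; however, at each round the robber inhabits exactly one component of the white region, which pins down the branch the cop must follow. After $\psdpts{G}{S}$ rounds the white region is exhausted, so the cop whose tree contains the robber has reached it, and the capture time bound follows. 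Everything else in the argument is routine bookkeeping.
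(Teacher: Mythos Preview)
The paper does not supply a proof of this observation; it is quoted directly from \cite{anderson2021product} with no argument given, so there is nothing in the paper to compare against. Your argument for part~(1) is correct and is exactly the standard one.

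For part~(2) your plan is the right one and matches the argument in the literature, but the pursuit strategy as you describe it is underspecified, and the key invariant is asserted rather than verified. You have each cop ``advance one edge deeper into $T_v$'' toward the robber's current component of $G-S^{[i-1]}$, yet the cop's current vertex need not have any child in that component---indeed, the root $v$ itself may fail to be adjacent to the robber's initial component of $G-S$---and you never say what such a cop does. More importantly, you do not argue why the robber cannot slip through an unguarded vertex of $S^{[i-1]}$ once cops have moved off it. The clean invariant one actually needs is: before round $i$ the robber lies in a component $W_{i-1}$ of $G-S^{[i-1]}$ and \emph{every} vertex of $N(W_{i-1})$ carries a cop. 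In round $i$, each cop at some $u\in N(W_{i-1})$ with a unique neighbour $w\in W_{i-1}$ moves to $w$; all other cops stay. A vacated $u$ then has its only $W_{i-1}$-neighbour in $S^{(i)}$ and hence has no neighbour in the new region $W_i\subseteq W_{i-1}\setminus S^{(i)}$, so the boundary of $W_i$ is again fully guarded and the robber cannot leave it. After $\psdpts{G}{S}$ rounds the region is empty and the robber is caught. Your sketch gestures at this but conflates ``moving along $T_v$'' with ``guarding the boundary of the robber's region''; it is the latter that drives the proof.
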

The extreme low values in the remark below can easily be found by factoring $\psdthx(G)$.
\begin{remark}\cite{anderson2021product}\thlabel{rem:lowbounds}
Let $G$ be a graph.
\begin{enumerate}
    \item $\psdthx(G) = 1$ if and only if $G= K_1$,
    \item $\psdthx(G) = 2$ if and only if $G= 2K_1$ or $G$ is a star,
    \item $\psdthx(G) = 3$ if and only if $G$ satisfies exactly one of the following conditions:
    \begin{enumerate}
        \item $G = K_3$, $G = K_2\udot K_1$, or $G = 3K_1$,
        \item $G$ is a tree of radius $2$,
    \end{enumerate}
    \item $\psdthx(G) = 4$ if and only if $G$ satisfies at least one of the following conditions:
    \begin{enumerate}
        \item $G = K_4$, $G = K_3\udot K_1$, $G = K_2\udot 2K_1$, or $G = 4K_1$,
        \item $\psdpt(G,2) = 1$ and $\psdpt(G,1)>3$,
        \item $\psdz(G) = 1$ and $\psdpt(G)- 3$.
    \end{enumerate}
\end{enumerate}
\end{remark}

If the positive semidefinite zero forcing propagation time of a graph is $1$, the propagation time of any \psdset is $0$ or $1$. So, the result for product throttling (initial cost and no initial cost) is immediate.
\begin{obs}\thlabel{obs:PSDproptimeisone}
    Let $G$ be a graph. If $\psdpt(G) = 1$, then $\psdthx(G) = \min\{n, 2\psdz(G)\}$ and $\psdths(G) = \psdz(G)$.
\end{obs}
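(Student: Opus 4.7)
The plan is to establish each formula via matching upper and lower bounds, exploiting the fact that the hypothesis $\psdpt(G) = 1$ gives a minimum PSD forcing set $S_0$ with $|S_0| = \psdz(G)$ and $\psdpts{G}{S_0} = 1$ (unpacking the definition $\psdpt(G) = \psdpt(G, \psdz(G))$). Note also that $\psdpt(G) = 1 \neq 0$ forces $\psdz(G) < n$, since the only way to achieve propagation time $0$ is $S = V(G)$.

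For the initial cost equality $\psdthx(G) = \min\{n, 2\psdz(G)\}$, I would establish the upper bound using two candidate sets. Taking $S = V(G)$ gives a PSD forcing set with propagation time $0$, yielding $\psdthxs{G}{V(G)} = n(1+0) = n$. Taking $S_0$ as above gives $\psdthxs{G}{S_0} = \psdz(G)(1+1) = 2\psdz(G)$. Hence $\psdthx(G) \le \min\{n, 2\psdz(G)\}$. For the lower bound, I would consider an arbitrary PSD forcing set $S$ of size $k \ge \psdz(G)$ and split into two cases. If $k = n$, then $\psdthxs{G}{S} \ge n$ trivially. If $k < n$, then $S \ne V(G)$, so at least one forcing round is needed, giving $\psdpts{G}{S} \ge 1$ and thus $\psdthxs{G}{S} \ge 2k \ge 2\psdz(G)$. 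In either case $\psdthxs{G}{S} \ge \min\{n, 2\psdz(G)\}$, and minimizing over $k$ and $S$ completes the argument.

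For the no initial cost equality $\psdths(G) = \psdz(G)$, the argument is even shorter because the definition of $\psdths(G)$ restricts to $\psdz(G) \le k < n$, so every admissible $S$ automatically satisfies $\psdpts{G}{S} \ge 1$. Thus $\psdthss{G}{S} = |S| \cdot \psdpts{G}{S} \ge |S| \ge \psdz(G)$, giving the lower bound, while the set $S_0$ above witnesses $\psdthss{G}{S_0} = \psdz(G) \cdot 1 = \psdz(G)$, giving the upper bound.

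There is no serious obstacle here; the only points requiring care are (i) cleanly separating the $k = n$ case from the $k < n$ case in the initial cost lower bound, since those are the two regimes producing the two candidates inside the $\min$, and (ii) noting that the definition of $\psdths(G)$ already excludes $|S| = n$, so one does not have to handle a degenerate zero-propagation case for the no initial cost formula.
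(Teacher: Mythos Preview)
Your proposal is correct and matches the paper's approach: the paper simply remarks that when $\psdpt(G)=1$ the propagation time of any relevant PSD forcing set is $0$ or $1$ and declares the result immediate, and your argument is a careful unpacking of exactly that observation via the obvious upper-bound witnesses $V(G)$ and a minimum PSD forcing set $S_0$, together with the trivial lower bound $\psdpts{G}{S}\ge 1$ whenever $|S|<n$.
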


In \cite{anderson2021product}, Anderson et al. remarked that we can use the $k$-radius of a graph to find a bound for the product throttling: $pt_+(G,k) \geq \rad_k(G)$ and $\psdthx(G) \geq \min_{Z_+(G)\leq k\leq n}k(1+\rad_k(G)).$ After establishing a lemma, we improve this to a tight bound.

\begin{lem}\thlabel{lem:PSDrad}
Let $G$ be a connected graph, $S$ a PSD forcing set of $G$, and $\mathcal{T}$ a PSD forcing tree covering of $S$. Then 
$$|S|-1 +|S|\max\{\rad(T):T\in \mathcal{T}\} \geq \rad(G).$$
\end{lem}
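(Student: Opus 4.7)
The plan is to construct a spanning tree $T^*$ of $G$ that contains the PSD forcing tree cover $\mathcal{T}$, to bound $\operatorname{diam}(T^*)$, and then to combine two standard facts: (i) because $T^*$ is a spanning subgraph of $G$, $\rad(G)\leq \rad(T^*)$; and (ii) because $T^*$ is a tree, $\rad(T^*)\leq \lceil\operatorname{diam}(T^*)/2\rceil$. So the real work is to bound the diameter of $T^*$.

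To build $T^*$, observe that $\mathcal{T}=(V(G),E(\F))$ is a spanning forest of $G$ with exactly $|S|$ components, one tree $T_v$ per $v\in S$. Since $G$ is connected, I can successively add $|S|-1$ edges of $E(G)\setminus E(\F)$, each joining two different components of the current forest, to obtain a spanning tree $T^*$ of $G$. Every added ``bridge'' edge goes between distinct trees of $\mathcal{T}$, so within each $T\in\mathcal{T}$ the edge set of $T^*$ coincides with that of $T$; in particular each $T$ is an induced subtree of $T^*$. Now set $r=\max\{\rad(T):T\in\mathcal{T}\}$ and let $P$ be a longest path in $T^*$. Since each $T$ is an induced subtree of the tree $T^*$, the path $P$ can enter and leave each $T$ at most once, so $P\cap T$ is a (possibly empty) subpath of $T$ of length at most $\operatorname{diam}(T)\leq 2\,\rad(T)\leq 2r$. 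If $P$ visits $m\leq |S|$ of the trees then it uses $m-1$ bridge edges, so
\[
\operatorname{diam}(T^*) \;=\; |E(P)| \;\leq\; 2rm+(m-1) \;\leq\; |S|(2r+1)-1.
\]
Combining this bound with (i) and (ii) yields
\[
\rad(G)\;\leq\;\rad(T^*)\;\leq\;\left\lceil\tfrac{|S|(2r+1)-1}{2}\right\rceil\;=\;|S|r+\left\lceil\tfrac{|S|-1}{2}\right\rceil\;\leq\;|S|r+|S|-1,
\]
which is the required inequality.

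The step I expect to require the most care is the observation that each $T\in\mathcal{T}$ remains an \emph{induced} subtree of $T^*$; it is precisely this structural fact that forces any simple path in $T^*$ to intersect each $T$ in a single subpath of length at most $2r$, rather than accumulating multiple $\operatorname{diam}(T)$ contributions through re-entries. Once that is in place, the rest is straightforward algebra together with the elementary facts that distances weakly decrease when passing from a spanning subgraph to $G$ and that a tree satisfies $\rad\leq\lceil\operatorname{diam}/2\rceil$.
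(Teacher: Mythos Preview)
Your proof is correct and follows the same overall scaffold as the paper: both construct a spanning tree of $G$ that contains the forest $\mathcal{T}$ and then bound its radius from above. The difference lies in how that radius bound is obtained. The paper contracts each $T\in\mathcal{T}$ to a vertex to obtain a tree $X'$ on $|S|$ vertices and invokes the inequality $\rad(X)\leq \rad(X')+\sum_{T\in\mathcal{T}}\rad(T)$, then uses $\rad(X')\leq |S|-1$. You instead bound the diameter of the spanning tree directly, using that each $T$ is an induced subtree (so a longest path meets each $T$ in a single segment of length at most $2r$) together with the tree identity $\rad=\lceil\operatorname{diam}/2\rceil$. Your route is more self-contained---the contraction inequality in the paper is asserted without justification---and it actually yields the sharper intermediate estimate $\rad(G)\leq |S|\,r+\lceil(|S|-1)/2\rceil$ before relaxing to $|S|\,r+|S|-1$.
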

\begin{proof}
Let $X$ be a spanning tree of $G$ such that $\dot\bigcup_{T\in \mathcal{T}}T$ is a subgraph of $X$. 

Let $X'$ be the tree obtained from $X$ by contracting each subgraph $T\in \mathcal{T}$. Since each vertex in $S$ forms a tree in $\mathcal{T}$, $X'$ has $|S|$ vertices and $|S|-1$ edges. So, we have the following:
\begin{align*}
    \rad(G)&\leq \rad(X) 
     \leq  \rad(X') + \sum_{T\in \mathcal{T}}(\rad(T))
    \leq |S|-1 +|S|\max\{\rad(T):T\in \mathcal{T}\}. \qedhere
\end{align*}
\end{proof}

\begin{thm}\thlabel{thm:PSDrad}
Let $G$ be a connected graph. Then $\psdthx(G)\geq 1+\rad(G)$ with equality whenever $G$ is a tree. 
\end{thm}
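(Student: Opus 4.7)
The plan is to combine Lemma \thref{lem:PSDrad} with a simple observation relating the propagation time of a PSD forcing set to the radii of the trees in its tree cover. Fix an arbitrary PSD forcing set $S$ of $G$, fix a choice of forces realizing the propagation time $\psdpts{G}{S}$, and let $\mathcal{T}$ be the resulting PSD forcing tree cover. My strategy is to bound $\max\{\rad(T):T\in\mathcal{T}\}$ from above by $\psdpts{G}{S}$ and then feed this into the lemma to turn $|S|-1+|S|\max\{\rad(T):T\in\mathcal{T}\}\ge\rad(G)$ into the desired bound $|S|(1+\psdpts{G}{S})\ge 1+\rad(G)$.

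The key sub-claim is: for every tree $T_v\in\mathcal{T}$ rooted at some $v\in S$, one has $\rad(T_v)\le\psdpts{G}{S}$. To see this, note that each force in $\mathcal{F}$ points from a vertex to a unique child in the forcing tree, so for any $u\in T_v$ the unique $v$--$u$ path in $T_v$ is traversed one edge per round during propagation. Hence $\rd(u)\ge d_{T_v}(v,u)$, and taking the maximum over $u\in T_v$ gives $e_{T_v}(v)\le\max_{u\in T_v}\rd(u)\le\psdpts{G}{S}$. Since $\rad(T_v)\le e_{T_v}(v)$, the sub-claim follows. Plugging this into Lemma \thref{lem:PSDrad} yields
\[
\rad(G)\le |S|-1+|S|\,\psdpts{G}{S},
\]
so $1+\rad(G)\le |S|(1+\psdpts{G}{S})=\psdthxs{G}{S}$. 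Taking the minimum over all PSD forcing sets $S$ gives $1+\rad(G)\le\psdthx(G)$.

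For the equality statement, when $G$ is a tree the conclusion $\psdthx(G)=1+\rad(G)$ is exactly Corollary \thref{cor:trees}, so no further work is required; alternatively, one may note that placing a single center of $G$ as the initial blue vertex achieves propagation time $\rad(G)$, matching the lower bound.

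The main obstacle is justifying the sub-claim $\rad(T_v)\le\psdpts{G}{S}$ cleanly. One must be careful that the forcing tree is rooted at $v\in S$ and that forces propagate along tree edges away from the root; this is what licenses the inequality $\rd(u)\ge d_{T_v}(v,u)$. Once that is in place, everything else is a one-line combination with the lemma.
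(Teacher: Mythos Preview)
Your proof is correct and follows essentially the same route as the paper: both use Lemma~\ref{lem:PSDrad} together with the observation that $\psdpts{G}{S}\ge\max\{\rad(T):T\in\mathcal{T}\}$, and both invoke Corollary~\ref{cor:trees} for the tree case. The only difference is that you spell out a justification for the sub-claim (via $\rd(u)\ge d_{T_v}(v,u)$ and $\rad(T_v)\le e_{T_v}(v)$), whereas the paper states it as a one-line observation.
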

\begin{proof}
Suppose that $S$ is a \psdset of $G$. Consider a PSD tree covering $\mathcal{T}$ of $S$. Observe that $\psdpts{G}{S} \geq \max\{\rad(T):T\in \mathcal{T}\}$. From this observation and \thref{lem:PSDrad} we have the following: 
\begin{align*}
    \psdthxs{G}{S} = |S|(1+\psdpts{G}{S})
    \geq |S|(1+\max\{\rad(T):T\in \mathcal{T}\})
    \geq 1+\rad(G).
\end{align*}

It is shown in \cite{anderson2021product} that $\psdthx(T) = 1 + \rad(T)$ for every tree $T$ (see also \thref{cor:trees} and \thref{prop:cops_tree_cycle}).
\end{proof}
The following is an example of a graph that is not a tree that achieves equality for the bound in \thref{thm:PSDrad}.
\begin{ex}
For $a\geq 1$, $1+\rad(C_{4a+6}) = 2a+4 = \psdthx(C_{4a+6})$ by \thref{thm:cycles} below.
\end{ex}

We wish to use \thref{lem:PSDrad} to develop a similar bound for no initial cost throttling. However, the immediate bound that is attainable is not useful.

\begin{rem} Let $G$ be a connected graph and $k\geq 1$. By \thref{lem:PSDrad}, $k-1 +k\psdpt(G,k) \geq \rad (G)$. So, $\psdths(G,k)\geq \rad(G) - k +1$. So, $$\psdths(G)\geq \min\{\rad(G) - k +1 : \psdz(G)\leq k < |V(G)|\}.$$
However, this bound is not useful, because $\rad(G) - (n-1) + 1 \leq 0$ for $n\geq 3$.
\end{rem}

One may also use an independent set of vertices to find an upper bound, because $\psdpts{G}{A} = 1$ for an independent set $A$. This implies $\psdth(G) \leq |V(G)|-\alpha(G) +1$ \cite{carlson2019throttling}, $\psdths(G)\leq |V(G)| - \alpha(G)$ \cite{anderson2021product}, and the next result.
\begin{prop}\thlabel{prop:PSDalpha}
    Let $G$ be a connected graph of order $n$. Then 
    $\psdthx(G) \leq 2(n-\alpha(G))$. This bound is tight.
    \end{prop}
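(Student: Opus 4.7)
The plan is to construct a single explicit PSD forcing set that realizes the claimed bound. Let $A \subseteq V(G)$ be a maximum independent set, so that $|A| = \alpha(G)$, and set $S := V(G) \setminus A$, which has size $n - \alpha(G)$. Because $A$ is independent, $G - S = G[A]$ has no edges, so every component of $G - S$ is a single vertex of $A$. This isolation of the components in $G - S$ is what will make $S$ propagate in one round.

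Next I would verify the PSD color change rule on $S$. Assuming $n \geq 2$, connectedness of $G$ forces every $w \in A$ to have at least one neighbor, and every such neighbor must lie in $S$ (since $A$ is independent). Fix such a $u \in S \cap N(w)$. The component of $G - S$ containing $w$ is $\{w\}$, so in $G[\{w\} \cup S]$ the only white vertex is $w$; in particular $w$ is the unique white neighbor of $u$ in this induced subgraph, and the PSD color change rule gives $u \to w$ in the first round. Thus every vertex of $A$ turns blue in round one, so $\psdpts{G}{S} = 1$ and
$$\psdthx(G) \leq \psdthxs{G}{S} = |S|\bigl(1 + \psdpts{G}{S}\bigr) = 2(n - \alpha(G)).$$

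For the tightness claim I would exhibit the star $G = K_{1,n-1}$, for which $\alpha(G) = n - 1$ and so the bound reads $\psdthx(G) \leq 2$; equality follows from the characterization of graphs with initial cost product throttling number $2$ in \thref{rem:lowbounds}. (One can also note that $C_4 = K_{2,2}$ satisfies $2(n - \alpha) = 4$, which matches the true value of $\psdthx(C_4)$.) There is no real obstacle in the argument; the only substantive idea is the structural observation that the complement of an independent set always forms a PSD forcing set whose propagation time is exactly one, after which the bound is immediate by choosing the independent set as large as possible.
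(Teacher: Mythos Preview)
Your argument is correct and matches the paper's approach: the paper also obtains the bound by taking the complement of a maximum independent set (remarking just before the proposition that this set has PSD propagation time $1$, and then calling the inequality ``immediate''), and your explicit verification of the color change rule supplies exactly the details the paper omits. For tightness the paper uses the complete bipartite graphs $K_{n,m}$ in general (via \thref{obs:PSDproptimeisone}), of which your star $K_{1,n-1}$ and $C_4 = K_{2,2}$ are special cases, so there is no substantive difference.
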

    \begin{proof}
    This bound is immediate. To see that this is tight, consider the complete bipartite graph $K_{n,m}$. Note that $\psdz(K_{n,m}) = \min\{n,m\}$ and $\psdpt(K_{n,m}) = 1$. So, by \thref{obs:PSDproptimeisone}, $\psdthx(K_{n,m}) = 2(\min\{n,m\}) = 2(n-\alpha(K_{n,m}))$.
    \end{proof}

    The next result describes relationships between various parameters involving sets $\Sx$ and $\Ss$ that realize $\psdthx(G)$ and $\psdths(G)$.
\begin{prop}
    
     Let $G$ be a connected graph, $\Ss,\Sx\subseteq V(G)$ such that $\psdthx(G) < |V(G)|$, $\psdthx(G)  = \psdthxs{G}{\Sx}$, and $\psdths(G) = \psdthss{G}{\Ss}$. Then 
    \begin{enumerate}
        \item $\psdpts{G}{\Ss} = \psdpt(G,|\Ss|)$ and $\psdpts{G}{\Sx} = \psdpt(G,|\Sx|)$,
        \item $|\Ss| \geq |\Sx|$,
        \item $\psdpts{G}{\Ss} \leq \psdpts{G}{\Sx} $,
        \item If $|\Ss| = |\Sx|,$ then $\psdths(G) =\psdthss{G}{\Sx}$ and $\psdthx(G) =\psdthxs{G}{\Ss}$.
    \end{enumerate}
\end{prop}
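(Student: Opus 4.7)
The plan is to handle the four claims in order, using only the mutual optimality of $\Sx$ and $\Ss$ together with the monotonicity fact that $\psdpt(G,k)$ is non-increasing in $k$. The heart of the argument is part~(2), where two minimality inequalities are chained to force $|\Sx| \leq |\Ss|$; the other three parts then reduce to bookkeeping.

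For~(1), I would argue by contradiction. If some $T \subseteq V(G)$ with $|T|=|\Ss|$ and $\psdpts{G}{T} < \psdpts{G}{\Ss}$ existed, then $\psdthss{G}{T} = |T|\,\psdpts{G}{T} < |\Ss|\,\psdpts{G}{\Ss} = \psdths(G)$, contradicting the choice of $\Ss$; the analogous argument using $\psdthxs{G}{T} = |T|(1+\psdpts{G}{T})$ works for $\Sx$. For~(2), I would first observe that the hypothesis $\psdthx(G) < |V(G)|$ rules out $\psdpts{G}{\Sx} = 0$, so $|\Sx| < |V(G)|$ and $\Sx$ is admissible for the no-initial-cost problem. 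Since $\Ss$ is a PSD forcing set (because $|\Ss| \geq \psdz(G)$), it is admissible for the initial-cost problem, giving
\[ \psdthx(G) \leq \psdthxs{G}{\Ss} = |\Ss| + |\Ss|\,\psdpts{G}{\Ss} = |\Ss| + \psdths(G). \]
Admissibility of $\Sx$ for the no-initial-cost problem gives $\psdths(G) \leq \psdthss{G}{\Sx} = |\Sx|\,\psdpts{G}{\Sx}$. Substituting into the previous display and using $\psdthx(G) = |\Sx| + |\Sx|\,\psdpts{G}{\Sx}$, then cancelling $|\Sx|\,\psdpts{G}{\Sx}$, yields $|\Sx| \leq |\Ss|$. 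I expect this chaining step to be the main obstacle, since one must first confirm that each optimizer is an admissible set for the other throttling problem before the minimality inequalities can be applied.

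Part~(3) follows from the monotonicity observation that $\psdpt(G,k+1) \leq \psdpt(G,k)$: given any set of size $k$ realizing $\psdpt(G,k)$, adding an arbitrary vertex produces a set of size $k+1$ whose propagation time is no larger, because every force available in the original process remains available at the same round. Combining this monotonicity with parts~(1) and~(2) gives $\psdpts{G}{\Ss} = \psdpt(G,|\Ss|) \leq \psdpt(G,|\Sx|) = \psdpts{G}{\Sx}$. For~(4), if $|\Ss| = |\Sx|$ then part~(1) forces $\psdpts{G}{\Ss} = \psdpt(G,|\Ss|) = \psdpt(G,|\Sx|) = \psdpts{G}{\Sx}$, so $\Ss$ and $\Sx$ are interchangeable in both throttling expressions; substituting yields $\psdthss{G}{\Sx} = \psdths(G)$ and $\psdthxs{G}{\Ss} = \psdthx(G)$.
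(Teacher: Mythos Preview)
Your proposal is correct and follows essentially the same route as the paper. In particular, the chain of inequalities you set up for part~(2),
\[
|\Sx| + |\Sx|\,\psdpts{G}{\Sx} \;=\; \psdthx(G) \;\leq\; \psdthxs{G}{\Ss} \;=\; |\Ss| + \psdths(G) \;\leq\; |\Ss| + \psdthss{G}{\Sx} \;=\; |\Ss| + |\Sx|\,\psdpts{G}{\Sx},
\]
is exactly the paper's argument, and your treatments of (1), (3), and (4) match the paper's (your contradiction argument for (1) is simply the contrapositive of the paper's squeeze $\psdthx(G)\le |\Sx|(1+\psdpt(G,|\Sx|))\le |\Sx|(1+\psdpts{G}{\Sx})=\psdthx(G)$). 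One small improvement in your write-up is that you make explicit why the hypothesis $\psdthx(G)<|V(G)|$ is needed: it guarantees $|\Sx|<|V(G)|$, so that $\Sx$ is admissible for the no-initial-cost minimum and the inequality $\psdths(G)\le \psdthss{G}{\Sx}$ is legitimate; the paper uses this implicitly. A minor wording point: the parenthetical ``because $|\Ss|\ge \psdz(G)$'' is not by itself the reason $\Ss$ is a PSD forcing set---rather, $\Ss$ realizes the finite value $\psdths(G)$, hence $\psdpts{G}{\Ss}<\infty$.
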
   
\begin{proof}
    We first prove (1). Since 
    \begin{align*}
        \psdthx(G) \leq \psdthx(G,|\Sx|) 
                 = |\Sx|(1+\psdpt(G,|\Sx|))
                \leq |\Sx|(1+\psdpts{G}{\Sx}) 
                = \psdthx(G),
    \end{align*}
it must be that $\psdpt(G,|\Sx|)) =  \psdpts{G}{\Sx}$. Similarly, $\psdpt(G,|\Ss|)) =  \psdpts{G}{\Ss}$.\\

We next prove (2). Observe the following:
\begin{align*}
    |\Sx|+ |\Sx|\psdpts{G}{\Sx} & = \psdthxs{G}{\Sx}\\
    &\leq \psdthxs{G}{\Ss} \\
    &=|\Ss|+ |\Ss|\psdpts{G}{\Ss}\\
    &=|\Ss|+ \psdthss{G}{\Ss}\\
    &\leq |\Ss|+ \psdthss{G}{\Sx}\\
    &= |\Ss|+ |\Sx|\psdpts{G}{\Sx}.
\end{align*}

Thus, $|\Sx| \leq |\Ss|$. \\

We next prove (3). Since $|\Ss| \geq |\Sx|$, and by (1),
 $$\psdpts{G}{\Ss} = \psdpt(G,|\Ss|) \leq \psdpt(G|\Sx|) = \psdpts{G}{\Sx}.$$

To prove (4), we have that 
\begin{align*}
    \psdthx(G) &= \psdpts{G}{\Sx}\\
     &= |\Sx|(1+\psdpts{G}{\Sx}) \\
     &= |\Sx|(1+\psdpt(G,|\Sx|))\\
     &=  |\Ss|(1+\psdpt(G,|\Ss|))\\
     &=|\Ss|(1+\psdpts{G}{\Ss})\\
     &= \psdthxs{G}{\Ss}.
\end{align*}
Similarly $\psdths(G)= \psdthss{G}{\Sx}$.
\end{proof}

The following is a result from \cite{carlson2019throttling} that was used for sum throttling.

\begin{lem}{\rm{\cite{carlson2019throttling}}} \thlabel{lem:PSDdelta_pt}
Suppose $G$ is a graph of order $n$ and $S$ is a positive semidefinite zero forcing set of $G$. Then,

$$n\leq \begin{cases}
|S|(1+2\psdpt(G;S)) & \text{if } \Delta(G)=2 \\
 |S|\left(1+\frac{\Delta(G)(\Delta(G)-1)^{\psdpt(G;S)}- \Delta(G)}{\Delta(G)-2} \right) & \text{if } \Delta(G)>2.
\end{cases}
$$
\end{lem}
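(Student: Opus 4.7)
The plan is to reduce the lemma to a Moore-type sphere-packing count. Concretely, I would first show that every vertex of $G$ must lie within graph-distance $p := \psdpts{G}{S}$ of some vertex of $S$, and then bound the number of vertices in such a ball using $\Delta := \Delta(G)$.

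For the distance step I would prove by induction on $i \geq 0$ that $S^{[i]} \subseteq \{v \in V(G) : d(S,v) \leq i\}$. The base case $i=0$ is immediate since $S^{[0]}=S$. For the inductive step, any $v \in S^{(i)}$ is forced by some $u \in S^{[i-1]}$, and the PSD color change rule requires $uv$ to be an edge of $G$; by the inductive hypothesis $d(S,u) \leq i-1$, so $d(S,v) \leq d(S,u)+1 \leq i$. Since $S$ is a PSD forcing set, $S^{[p]} = V(G)$, and hence every vertex of $G$ lies at distance at most $p$ from $S$.

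For the counting step I would apply the standard ball-count from a set: there are $|S|$ vertices at distance $0$ from $S$, at most $|S|\Delta$ at distance $1$, and at most $|S|\Delta(\Delta-1)^{j-1}$ at distance exactly $j$ for each $j \geq 1$, because each vertex in BFS layer $j-1$ has at most $\Delta-1$ neighbors that are not already on a shortest path back toward $S$. Summing through layer $p$ yields
\[
n \;\leq\; |S|\left(1 + \Delta\sum_{j=0}^{p-1}(\Delta-1)^{j}\right).
\]
When $\Delta=2$ the geometric sum equals $p$ and this becomes $n \leq |S|(1+2p)$. When $\Delta>2$ the sum evaluates to $\bigl((\Delta-1)^{p}-1\bigr)/(\Delta-2)$, and a short algebraic simplification gives the displayed closed form.

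The main point requiring care is the inductive step, since the PSD color change rule depends on the component structure of $G - S^{[i-1]}$ rather than on $G$ alone; however, because a PSD force always requires an actual edge between the forcing and forced vertex, only adjacency is used in the distance argument, which is exactly what makes the Moore-type bound applicable. I do not anticipate any genuine obstacle beyond bookkeeping the geometric series in the two cases $\Delta=2$ and $\Delta>2$.
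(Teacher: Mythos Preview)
The paper does not supply its own proof of this lemma; it is quoted verbatim from \cite{carlson2019throttling}. Your argument is correct and is exactly the standard approach: the PSD forcing process can only reach vertices in the $i$th BFS layer from $S$ by round $i$, so $V(G)$ is contained in the union of the radius-$p$ balls about the vertices of $S$, and then the Moore bound on such a ball gives the stated inequality after summing the geometric series. The only cosmetic point is that the layer-by-layer justification (``at most $\Delta-1$ new neighbors'') is cleaner if phrased as: every vertex at distance $\le p$ from $S$ lies in the radius-$p$ ball about some single $v\in S$, and the standard Moore bound gives $|B_p(v)|\le 1+\Delta\sum_{j=0}^{p-1}(\Delta-1)^j$; summing over $v\in S$ yields your inequality without needing to argue about shortest paths back toward the set.
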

We use this lemma to prove the next two results which are analogous to \cite[Proposition 2.5 and Theorem 2.6]{carlson2019throttling}.
\begin{prop}\thlabel{prop:PSDdelta_throttling}
Let $\Delta(G) = 2$. Then $$\psdthx(G) \geq \left\lceil \frac{1}{2}(Z_+(G) +n)\right\rceil$$
and this bound is tight.
\end{prop}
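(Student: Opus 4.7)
The plan is to apply \thref{lem:PSDdelta_pt} directly: when $\Delta(G)=2$, every PSD forcing set $S$ satisfies $n \leq |S|(1+2\psdpts{G}{S})$. I would fix an arbitrary PSD forcing set $S$ with $|S|=k$ and $p = \psdpts{G}{S}$, and use this inequality to bound the product $k(1+p)$ from below.

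Rearranging $n \leq k(1+2p) = k + 2kp$ gives $2kp \geq n-k$, so
\begin{align*}
2\,\psdthxs{G}{S} \;=\; 2k(1+p) \;=\; 2k + 2kp \;\geq\; 2k + (n-k) \;=\; n+k \;\geq\; n + \psdz(G),
\end{align*}
where the last inequality uses $k \geq \psdz(G)$. Since $\psdthxs{G}{S}$ is an integer, this yields $\psdthxs{G}{S} \geq \lceil (n + \psdz(G))/2 \rceil$. Minimizing over $S$ gives the claimed lower bound for $\psdthx(G)$.

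For tightness, the natural candidate is the path $P_n$, which satisfies $\Delta(P_n)=2$ and $\psdz(P_n)=1$. By \thref{cor:trees}, $\psdthx(P_n) = 1 + \lceil (n-1)/2 \rceil = \lceil (n+1)/2 \rceil = \lceil (\psdz(P_n)+n)/2 \rceil$, so equality holds. I would state this example briefly to finish the proof.

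There is no real obstacle here: the bound is essentially an algebraic rearrangement of the cited lemma, and the tightness example is an immediate consequence of the already-stated tree formula. The only step that requires any care is taking the ceiling correctly in the final integer bound and checking that $k \geq \psdz(G)$ is used at the right moment.
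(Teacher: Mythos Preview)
Your proof is correct and follows essentially the same approach as the paper: apply \thref{lem:PSDdelta_pt} to bound $k(1+p)$ from below by $\tfrac{1}{2}(n+k)\geq \tfrac{1}{2}(n+\psdz(G))$, take the ceiling, and cite \thref{cor:trees} for tightness at $P_n$. Your algebraic rearrangement (multiplying through by $2$ rather than isolating $p$) is slightly cleaner but otherwise identical in substance.
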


\begin{proof}
For a PSD zero forcing set $S$, let $s = |S|$ and $p = \psdpt(G,s)$. We wish to minimize $s(1+p)$ and $p$ is subject to $n\leq s(1+2p)$ by \thref{lem:PSDdelta_pt}. So, we have that $p \geq \frac{1}{2}\left(\frac{n}{s} -1 \right)$. Thus, 
\begin{align*}
    s(1+p) & \geq s\left(1 + \frac{n}{2s} - \frac{1}{2}\right)
     = \frac{s+n}{2}
    \geq \frac{\psdz(G) +n}{2}.
\end{align*}
\noindent Therefore, 
$$\psdthx(G) \geq \left\lceil \frac{1}{2}(Z_+(G) +n)\right\rceil.$$
\noindent\thref{cor:trees} shows that this bound is tight for paths.
\end{proof}
Notice that the \thref{prop:PSDdelta_throttling} is different than Proposition 2.5 in \cite{carlson2019throttling} which showed that $\psdth(G) \geq \lc\sqrt{2n}-\frac{1}{2}\rc$ for a graph $G$ of order $n$.
The next theorem is analogous to Theorem 2.6 in \cite{carlson2019throttling} where if $\Delta(G)\geq 3$, then,
$\psdth(G)\geq \left\lceil 1+ \log_{\Delta(G)-1}\left[\frac{(\Delta(G) -2)n +2 \psdz(G)}{\Delta(G)\psdz(G)}\right]\right\rceil$.
\begin{thm}
Let $\Delta(G)\geq 3$. Then, 
$$\psdthx(G)\geq \left\lceil \psdz(G) \left(1+ \log_{\Delta(G)-1}\left[\frac{(\Delta(G) -2)n +2 \psdz(G)}{\Delta(G)\psdz(G)}\right]\right)\right\rceil.$$
\end{thm}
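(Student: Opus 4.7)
The plan is to follow the approach of Theorem 2.6 of \cite{carlson2019throttling} for sum throttling, adapted to the product.  Let $S$ be any PSD forcing set of $G$ with $|S|=s$ satisfying $\psdz(G) \leq s \leq n$, and set $p = \psdpts{G}{S}$.  First I would apply \thref{lem:PSDdelta_pt} in the case $\Delta(G) > 2$ and solve for $p$ to obtain
\[
p \;\geq\; \log_{\Delta(G)-1}\!\left[\frac{(\Delta(G)-2)\,n + 2s}{\Delta(G)\,s}\right].
\]
Multiplying through by $s$ and adding $s$ gives $\psdthxs{G}{S} = s(1+p) \geq f(s)$, where
\[
f(s) \;:=\; s\!\left(1 + \log_{\Delta(G)-1}\!\left[\frac{(\Delta(G)-2)\,n + 2s}{\Delta(G)\,s}\right]\right).
\]
Since $\psdthx(G) = \min\{\psdthx(G,k) : k \geq \psdz(G)\}$ and $\psdthx(G)$ is integer-valued, it then suffices to show that $f$ is non-decreasing on $[\psdz(G), n]$; the bound $\psdthx(G) \geq \lceil f(\psdz(G)) \rceil$ will then match the stated inequality.

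To verify the monotonicity I plan to normalize via $y = s/n$ and study
\[
g(y) \;:=\; \frac{f(ny)}{n} \;=\; y\!\left(1 + \log_{\Delta(G)-1}\!\left[\frac{(\Delta(G)-2) + 2y}{\Delta(G)\,y}\right]\right)
\]
on $(0,1]$.  A direct differentiation should produce the compact expression
\[
g''(y) \;=\; -\frac{(\Delta(G)-2)^2}{y\,\bigl((\Delta(G)-2) + 2y\bigr)^2\,\ln(\Delta(G)-1)} \;<\; 0,
\]
so $g'$ is strictly decreasing on $(0,1]$.  Evaluating at the right endpoint (where the logarithm vanishes) yields $g'(1) = 1 - \frac{\Delta(G)-2}{\Delta(G)\,\ln(\Delta(G)-1)}$, and this is positive because $\Delta\,\ln(\Delta-1) > \Delta - 2$ for $\Delta \geq 3$: immediate for $\Delta \geq 4$ since $\ln(\Delta-1) \geq 1$, and at $\Delta = 3$ the inequality reduces to the elementary $3\ln 2 > 1$.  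A strictly decreasing $g'$ that is positive at $y = 1$ is positive throughout $(0,1]$, so $g$ (and therefore $f$) is strictly increasing on the entire range.

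The main obstacle is this monotonicity verification: the bound $f(s)$ is the product of the increasing factor $s$ with the factor $1 + \log_{\Delta(G)-1}[\cdot]$, which itself decreases in $s$, so the net behavior is not immediately clear and must be pinned down by the second-derivative sign analysis above.  Once monotonicity is in hand, $f(s) \geq f(\psdz(G))$ on the admissible range yields the claimed ceiling lower bound on $\psdthx(G)$; the initial rearrangement of \thref{lem:PSDdelta_pt} and the final application of the ceiling are otherwise routine.
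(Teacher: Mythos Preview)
Your argument is correct and follows a genuinely different---in fact cleaner---route than the paper's. Both proofs start from \thref{lem:PSDdelta_pt}, solve for $p$, and reduce to showing that $f(s)=s(1+p(s))$ is nondecreasing for $s\ge \psdz(G)$. The paper quotes from \cite{carlson2019throttling} that $\frac{d}{ds}(s+p(s))>0$ (for $s\ge 1,\ \Delta\ge 4$ or $s\ge 2,\ \Delta=3$), then asserts the chain $\frac{d}{ds}\bigl(s(1+p(s))\bigr)=1+sp'(s)+p(s)\ge 1+p'(s)=\frac{d}{ds}(s+p(s))>0$, handling the residual case $\Delta=3,\ s=1$ by comparing $1+p(1)$ with $2(1+p(2))$ directly. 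You instead compute $g''<0$ explicitly, check $g'(1)>0$, and conclude $g'>0$ on $(0,1]$ by concavity---self-contained, with no appeal to the sum-throttling derivative and no separate $\Delta=3$ case. It is worth noting that the paper's middle inequality $1+sp'(s)+p(s)\ge 1+p'(s)$ is equivalent to $p(s)\ge (s-1)\lvert p'(s)\rvert$, and this actually \emph{fails} for $s$ near $n$: there $p(s)\to 0$ while $(s-1)\lvert p'(s)\rvert\to \frac{\Delta-2}{\Delta\ln(\Delta-1)}>0$. So your concavity argument is not merely an alternative but closes a genuine gap in the paper's chain of inequalities, while still yielding the same monotonicity conclusion and hence the stated bound.
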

\begin{proof}
By \thref{lem:PSDdelta_pt} we have that 
$n \geq |S|\left(1+\frac{\Delta(G)(\Delta(G)-1)^{\psdpt(G;S)}- \Delta(G)}{\Delta(G)-2}\right)$. Let $s,p, \Delta$ be $|S|, \psdpt(G;S),$ and $\Delta(G)$ respectively. 
Then,
$p \geq \frac{\ln\ls\frac{(\Delta -2)n +2s}{\Delta s}\rs}{\ln\lp\Delta -1\rp} $.

Define $p(s) = \frac{\ln \ls\frac{(\Delta -2)n +2s}{\Delta s}\rs}{\ln\lp\Delta -1\rp}$. Observe that $s(1+p) \geq s(1+p(s))$. In the proof for Theorem 2.6 in \cite{carlson2019throttling}, it is established that $\dv{s}(s+p) > 0$ when $s\geq$ and $\Delta \geq 4$, or when $s\geq 2$ and $\Delta =3$. So, under these conditions, 
$\dv{s}(s(1+p(s)) = 1 + s\dv{s}p(s) +p(s) \geq 1 + \dv{s}p(s) = \dv{s}(s+p(s)) >0.$

So, $s(1+p(s))$ is an increasing function of $s$ when $\Delta = 3$ and $s\geq 2$. So, when $\Delta = 3$, $\min\{s(1+p(s)): s\geq 1\}= \min\{1+p(1), 2(1+p(2))\}$. Now, in the proof for Theorem 2.6 in \cite{carlson2019throttling} it is shown that $\min\{1+p(1), 2+p(2))\} = 1 +p(1)$. And since $2+p(2) \leq 2(1+p(2))$, $\min\{1+p(1), 2(1+p(2))\} = 1+p(1)$. 

Therefore, for $\Delta \geq 3,$ 
$$\psdthx(G)\geq \left\lceil \psdz(G) \left(1+ \log_{\Delta(G)-1}\left[\frac{(\Delta(G) -2)n +2 \psdz(G)}{\Delta(G)\psdz(G)}\right]\right)\right\rceil. \qedhere$$
\end{proof}

In \cite{carlson2021various}, Carlson and Krischgau determined the following characterization for throttling with a finite family of finite induced subgraphs. 
\begin{thm}[\cite{carlson2021various}, Theorem 4.7]
Let $k$ be a non-negative integer and suppose $R$ is either the standard or PSD color change rule. The set of graphs
$G$ such that $\operatorname{th}_R(G) \geq |V(G)| - k$ and $|V(G)| \geq k$ is characterized by a finite family of forbidden induced subgraphs.
\end{thm}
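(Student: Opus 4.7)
The plan is to show that $\mathcal{P}_k := \{G : |V(G)| \geq k \text{ and } \operatorname{th}_R(G) \geq |V(G)| - k\}$ is closed under taking induced subgraphs, and then to bound the order of any minimal forbidden induced subgraph by an explicit $N(k)$. Since there are only finitely many graphs on at most $N(k)$ vertices, this yields the desired finite forbidden family.

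For heredity, fix $G \in \mathcal{P}_k$ with $n := |V(G)| \geq k + 1$ and $v \in V(G)$, and argue $G - v \in \mathcal{P}_k$ by contrapositive. If some $R$-forcing set $S$ of $G - v$ had $|S| + \operatorname{pt}_R(G - v; S) \leq (n - 1) - k - 1$, then $S \cup \{v\}$ would be an $R$-forcing set of $G$ whose propagation simulates that of $S$ in $G - v$ with $v$ treated as blue from the outset; because a vertex that is already blue never obstructs a force applied to a different vertex under either color change rule, one obtains $\operatorname{pt}_R(G; S \cup \{v\}) \leq \operatorname{pt}_R(G - v; S)$. Hence $\operatorname{th}_R(G) \leq (|S| + 1) + \operatorname{pt}_R(G - v; S) \leq n - k - 1 < n - k$, contradicting $G \in \mathcal{P}_k$. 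So $\mathcal{P}_k$ is hereditary and is characterized by its family of minimal non-members.

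The main obstacle is bounding the order of those minimal non-members. Let $G$ be such a minimal non-member, with an optimal $R$-forcing set $S$ realizing $|S| + \operatorname{pt}_R(G; S) = s + t \leq n - k - 1$. Running the heredity argument in reverse, for every $v \notin S$ the symmetric vertex-deletion simulation yields $\operatorname{pt}_R(G - v; S) \leq t$, while minimality forces $|S| + \operatorname{pt}_R(G - v; S) \geq (n - 1) - k$, so in fact $\operatorname{pt}_R(G - v; S) = t$ for every such $v$. This rigidity, together with the counting identity $n - s \geq t + k + 1$ (so that some round must force at least two vertices simultaneously), should be leveraged to show that removing a carefully chosen forced vertex strictly speeds up the propagation, violating the rigidity once $n$ exceeds some function of $k$. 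Carrying out this pigeonhole on the forcing tree cover of $S$---and splitting cases depending on whether the excess appears in a single round with many simultaneous forces or is spread across many rounds, and whether $R$ is the standard or PSD rule---is where I expect the bulk of the technical work to lie.
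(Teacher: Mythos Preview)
This theorem is not proved in the present paper; it is quoted from \cite{carlson2021various} as background for the subsequent proposition (which shows that the analogous statement \emph{fails} for initial-cost product throttling). There is therefore no proof here to compare your attempt against.

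Evaluating the proposal on its own merits: the heredity step is correct and standard---if $S$ forces $G-v$, then $S\cup\{v\}$ forces $G$ with no increase in propagation time, because an extra blue vertex can never block a force under either rule.

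The bounding step, however, contains a genuine error. You assert that for $v\notin S$ a ``symmetric vertex-deletion simulation'' gives $\operatorname{pt}_R(G-v;S)\le t$. This is false: $S$ need not even be an $R$-forcing set of $G-v$. Take $G=P_4$ on vertices $1,2,3,4$, $S=\{1\}$, and $v=3$; then $S$ forces $G$ in three rounds, but in $G-v$ the vertex $4$ is in a component disjoint from $S$ and is never forced. The asymmetry is real: adding a blue vertex can only help, whereas deleting a (white) vertex can sever the forcing chains that run through it. Consequently the ``rigidity'' equation $\operatorname{pt}_R(G-v;S)=t$ that you derive from minimality has no basis, and the pigeonhole strategy built on it collapses. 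Even setting this aside, you concede that the entire bounding argument is deferred to ``where the bulk of the technical work lies,'' so what remains is a framework rather than a proof. The argument in \cite{carlson2021various} does not proceed via this rigidity route; it goes through a structural characterization of graphs with small throttling number, from which the finiteness of the minimal obstructions is read off directly.
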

Unfortunately, the immediate analog of characterization with a finite family of forbidden induced subgraphs is not true for initial value product throttling. 
\begin{prop}
Let $k$ be a positive integer, the set of graphs $G$ such that $\psdthx(G) \geq |V(G)|-k$ cannot be characterized by a finite family of forbidden induced subgraphs. 
\end{prop}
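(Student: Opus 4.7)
The plan is to show that the class $\mathcal{C}_k := \{G : \psdthx(G) \geq |V(G)| - k\}$ fails to be closed under taking induced subgraphs. Since any class characterized by a family of forbidden induced subgraphs (finite or infinite) is automatically induced-hereditary, exhibiting a single witness to non-hereditariness will suffice to rule out a finite forbidden-subgraph characterization. I will produce one graph $G \in \mathcal{C}_k$ together with an induced subgraph $H$ of $G$ with $H \notin \mathcal{C}_k$.

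For the construction, take $G := K_{2, k+2}$, the complete bipartite graph with parts $A = \{u_1, u_2\}$ and $B = \{\ell_1, \ldots, \ell_{k+2}\}$, so $|V(G)| = k+4$. A direct check gives $\psdz(G) = 2$ and $\psdpt(G) = 1$: the set $A$ is a PSD forcing set because each $\ell_i$ is its own component of $G - A$ and is the unique white neighbor of $u_1$ in $G[A \cup \{\ell_i\}]$, so $A$ forces $V(G)$ in a single round. By \thref{obs:PSDproptimeisone}, $\psdthx(G) = \min\{|V(G)|, 2\psdz(G)\} = \min\{k+4, 4\} = 4$, and since $|V(G)| - k = 4$ we get $G \in \mathcal{C}_k$. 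Let $H := G - u_2$; this is the star $K_{1, k+2}$, a tree of radius $1$, so \thref{cor:trees} gives $\psdthx(H) = 1 + \rad(H) = 2$. But $|V(H)| - k = (k+3) - k = 3 > 2$, so $H \notin \mathcal{C}_k$.

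The conceptual point driving the argument is that deleting a single vertex drops the threshold $|V(G)| - k$ by exactly $1$, whereas $\psdthx$ can fall by a much larger amount (here, from $4$ down to $2$); the pair $(K_{2, k+2},\, K_{1, k+2})$ witnesses this gap cleanly. Nothing in the verification is delicate: the throttling numbers of $K_{2, k+2}$ and of a star follow immediately from already-cited results, and the only inequality to notice is $4 \leq k+4$, trivial for $k \geq 1$. In fact the argument yields the stronger conclusion that $\mathcal{C}_k$ admits no forbidden-induced-subgraph characterization whatsoever, finite or infinite.
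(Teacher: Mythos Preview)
Your proof is correct, and it takes a different route from the paper's. The paper argues universally: for an \emph{arbitrary} graph $H$, it builds a supergraph $G$ (by attaching a large clique $K_r$ to $H$ with $r \geq |V(H)|+2$) satisfying $\psdz(G) \geq |V(G)|/2$, whence $\psdthx(G) = |V(G)|$ by \thref{obs:PSDextreme}; since every graph then occurs as an induced subgraph of something in $\mathcal{C}_k$, no graph can be forbidden. You instead exhibit a single explicit witness to non-hereditariness, the pair $(K_{2,k+2},\, K_{1,k+2})$, and invoke the fact that forbidden-induced-subgraph classes are hereditary. Your argument is more elementary and self-contained---the relevant throttling numbers drop out of \thref{obs:PSDproptimeisone} and \thref{cor:trees} with no new construction---while the paper's version establishes the stronger-sounding fact that \emph{every} graph embeds into $\mathcal{C}_k$. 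Both arguments in fact rule out infinite forbidden families as well, as you observe.
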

\begin{proof}
Let $H$ be a graph, and suppose $r\geq |V(H)|+2$. Let $G$ be a subgraph of $H \vee K_r$ that contains $H$ and $K_r$ as subgraphs and at least one additional edge (see Figure \ref{fig:PSDforbidden}). Then $\psdz(G) \geq r-1 \geq \frac{|V(G)|}{2}$, which by \thref{obs:PSDextreme} implies that $th^\times_+(G) = |V(G)|> |V(G)|-k$. Since $H$ is an arbitrary graph, we cannot forbid a finite family of induced graphs. 
\end{proof}
\begin{figure}[h]
    \centering
\begin{tikzpicture}
    \draw (0,0) rectangle (2,2) node[pos=.5] {$H$};
    \draw (4,1) circle (1cm) node {$K_r$};
    \draw (2,1)-- (3,1);
\end{tikzpicture}
    \caption{$G$}
     \label{fig:PSDforbidden}
\end{figure}

      \section{Graph Operations}\label{sec:graph_operations}
We now find bounds for the positive semidefinite initial value product throtlling number of different graph operations. 

Let $G$ and $G'$ be graphs. Define $G\Box G'$ to be the graph such that $V( G\Box G' )  = V( G ) \times V( G ') $ and $E( G\Box G' )  = \{( x,x' ) ( y,y' )  : (x = y$ and $x'y'\in E\lp G' \rp  ) $ or $( x' = y'$ and $xy\in E \lp G \rp )  \}$. For $S\subseteq V( G\Box G' ) $, define $S_G=\{x\in V( G ) :( x,x' ) \in S$ for some $x'\in V( G' ) \}$.

\thref{prop:PSDptProduct}, \thref{prop:PSDptProduct2}, and \thref{thm:PSDupperboundProduct} state bounds for $\psdthx\lp G\Box H\rp$. Since the proofs for these bounds are similar to their analogs found in \cite{anderson2020pdproduct}, their proofs are ommited.
\begin{prop}\thlabel{prop:PSDptProduct}
Let $G$ and $G'$ be graphs and $S$ be a \psdset for $G\Box G'$, then $S_G$ is a \psdset of $G$ and $\psdpts{G}{S_G}\leq \psdpts{G\Box G'}{S}$. This inequality is tight.
\end{prop}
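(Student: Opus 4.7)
The plan is to prove both conclusions by induction on the round index $t$, establishing
\[
\lp S^{[t]}\rp_G \subseteq \lp S_G\rp^{[t]} \qquad \text{for every } t \ge 0.
\]
Taking $t=\psdpts{G\Box G'}{S}$ makes the left-hand side all of $V(G)$ because $S$ is a PSD forcing set of $G\Box G'$; this simultaneously certifies that $S_G$ is a PSD forcing set of $G$ and that $\psdpts{G}{S_G}\leq \psdpts{G\Box G'}{S}$.

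The base case $t=0$ holds by the definition of $S_G$. For the inductive step, fix $x\in \lp S^{[t]}\rp_G$. If $x\in \lp S_G\rp^{[t-1]}$ we are done by monotonicity, so assume $x\notin \lp S_G\rp^{[t-1]}$; by the contrapositive of the inductive hypothesis, $x\notin \lp S^{[t-1]}\rp_G$. Thus there is an $x'$ with $(x,x')\in S^{(t)}$, and we choose a witnessing force $(u,u')\to(x,x')$. The two kinds of edges in the Cartesian product give two cases. In the vertical case $u=x$ and $u'x'\in E(G')$, we would have $(x,u')\in S^{[t-1]}$, contradicting $x\notin \lp S^{[t-1]}\rp_G$. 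Hence the force is horizontal: $u'=x'$ and $ux\in E(G)$, and in particular $u\in \lp S^{[t-1]}\rp_G\subseteq \lp S_G\rp^{[t-1]}$.

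The core step is to verify that $u$ actually forces $x$ in $G$ at round $t$. Let $W$ be the component of $G-\lp S_G\rp^{[t-1]}$ containing $x$; by the PSD rule it suffices to show that $x$ is the unique neighbor of $u$ lying in $W$. Suppose for contradiction that $u$ had a second neighbor $y\in W$. An $x$-to-$y$ path inside $W$ consists of vertices all outside $\lp S_G\rp^{[t-1]}$ and therefore, by the contrapositive of the inductive hypothesis, all outside $\lp S^{[t-1]}\rp_G$. Lifting this path coordinate-wise to the slice $V(G)\times\{x'\}$ produces a path in $G\Box G'$ avoiding $S^{[t-1]}$ entirely, placing $(y,x')$ and $(x,x')$ in the same component of $(G\Box G')-S^{[t-1]}$. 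But then $(u,x')$ has two distinct white neighbors in that component, contradicting the chosen force $(u,x')\to(x,x')$.

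The main obstacle is precisely this uniqueness transfer: because the PSD color change rule is sensitive to components of the white subgraph, one must check that a competing white neighbor in $G$ genuinely lifts to a competing white witness in $G\Box G'$, and the argument hinges on reusing $x'$ as the second coordinate for the lift. Tightness requires no further work: taking $G'=K_1$ reduces $G\Box G'$ to $G$ and $S_G$ to $S$, producing equality for every $S$.
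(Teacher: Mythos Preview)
Your inductive argument for the containment $\lp S^{[t]}\rp_G \subseteq \lp S_G\rp^{[t]}$ is correct and self-contained. The paper actually omits this part of the proof entirely, deferring to the analogous result in \cite{anderson2020pdproduct}; so your round-by-round induction, with the horizontal/vertical case split and the lift of a white $x$--$y$ path into the slice $V(G)\times\{x'\}$ to derive a contradiction, supplies exactly what the paper leaves out. The key idea you identify---that a second white neighbor of $u$ in $G$ would lift to a second white neighbor of $(u,x')$ in the same component of $(G\Box G')-S^{[t-1]}$---is the right one.

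Where you and the paper genuinely diverge is on tightness. You invoke $G'=K_1$, which is valid but vacuous: it reduces $G\Box G'$ to $G$ itself and says nothing about the inequality in a nontrivial product. The paper instead exhibits the grid $P_n\Box P_m$ with $n\le m$ and $S=\{(\lceil n/2\rceil,1),\ldots,(\lceil n/2\rceil,m)\}$, so that $S_{P_n}=\{\lceil n/2\rceil\}$ and $\psdpts{P_n\Box P_m}{S}=\lceil (n-1)/2\rceil=\psdpts{P_n}{S_{P_n}}$. That example shows equality can occur with both factors nontrivial, which is what one actually wants from a tightness claim here. Your $K_1$ example is not wrong, but it is too degenerate to be informative; you should replace it with a genuine product.
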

\begin{proof}
For tightness, consider a grid $P_n\Box P_m$, where $V(P_n) = \lb 1,\ldots,n\rb$, $V(P_m) = \lb 1,\ldots,m\rb$, and $n\leq m$. Let $S = \lb\lp\lc \frac{n}{2}\rc, 1\rp,\ldots, \lp\lc \frac{n}{2}\rc, m\rp \rb$. So, $S_{P_n} = \lb\lc \frac{n}{2}\rc \rb$, and  $\psdpts{P_n\Box P_m}{S} = \lc \frac{n-1}{2}\rc  =\psdpts{P_n}{S_{P_n}}$.
\end{proof}

\begin{prop}\thlabel{prop:PSDptProduct2}
Let $G$ and $G'$ be graphs then $\psdthx\lp G\Box G' \rp  \geq \psdthx\lp G \rp $ and $\psdthx\lp G\Box G' \rp \geq \psdthx\lp G' \rp $. 
\end{prop}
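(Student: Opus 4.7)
The plan is to leverage \thref{prop:PSDptProduct} together with a trivial cardinality bound on the projection $S \mapsto S_G$. Specifically, starting from an initial set $S \subseteq V(G \Box G')$ that witnesses $\psdthx(G \Box G')$, I would show that its projection $S_G$ is a PSD forcing set of $G$ whose initial cost product throttling value is no larger, which forces $\psdthx(G) \leq \psdthx(G \Box G')$; the bound involving $G'$ then follows by symmetry.

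First, I would let $S \subseteq V(G \Box G')$ be an optimal initial set so that $\psdthx(G\Box G') = |S|\lp 1 + \psdpts{G \Box G'}{S}\rp$. By \thref{prop:PSDptProduct}, $S_G$ is a PSD forcing set of $G$ and $\psdpts{G}{S_G} \leq \psdpts{G\Box G'}{S}$. Next, I would observe the elementary cardinality bound $|S_G| \leq |S|$: the coordinate-projection map $S \to S_G$ sending $(x, x') \mapsto x$ is surjective by the definition of $S_G$, so the target is no bigger than the source. In particular $|S_G| \geq \psdz(G)$, since $S_G$ is a PSD forcing set, so $S_G$ is an eligible competitor in the minimum defining $\psdthx(G)$.

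Chaining these observations together yields
\begin{align*}
\psdthx(G) \leq \psdthxs{G}{S_G} = |S_G|\lp 1 + \psdpts{G}{S_G}\rp \leq |S|\lp 1 + \psdpts{G \Box G'}{S}\rp = \psdthx(G\Box G'),
\end{align*}
as desired. The inequality $\psdthx(G \Box G') \geq \psdthx(G')$ follows from the symmetric argument with the roles of $G$ and $G'$ interchanged, using the analogous projection $S_{G'} = \{x' \in V(G') : (x,x') \in S \text{ for some } x \in V(G)\}$.

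I do not anticipate any real obstacles: \thref{prop:PSDptProduct} supplies both the fact that $S_G$ forces $G$ and the propagation-time comparison, and the only remaining ingredient is the cardinality inequality $|S_G| \leq |S|$, which is immediate from the surjectivity of the projection map. The one point to be careful about is verifying that $S_G$ actually lies in the feasible range of the minimum defining $\psdthx(G)$, but this is automatic from $S_G$ being a PSD forcing set.
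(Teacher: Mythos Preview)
Your argument is correct and is exactly the natural approach: project an optimal $S$ to $S_G$, invoke \thref{prop:PSDptProduct} for both the forcing property and the propagation-time inequality, use $|S_G|\le |S|$, and multiply. The paper in fact omits the proof of this proposition (referring to an analogous argument in \cite{anderson2020pdproduct}), so there is nothing further to compare; your write-up supplies precisely the details one would expect.
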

Other than the trivial example $G\Box K_1 = G$, we do not have an example for tightness in the previous bound. 

\begin{thm}\thlabel{thm:PSDupperboundProduct}
Let $G$ and $G'$ be graphs, then $\psdthx\lp G\Box G' \rp  \leq \psdthx\lp G \rp |V\lp G' \rp |$ and  $\psdthx\lp G\Box G' \rp  \leq \psdthx\lp G' \rp |V\lp G \rp |.$ And this bound is tight. 
\end{thm}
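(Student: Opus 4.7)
The plan is to take a set $S$ that realizes $\psdthx(G)$ (so $|S|(1 + \psdpts{G}{S}) = \psdthx(G)$) and lift it to the candidate set $T := S \times V(G')$ in $G \Box G'$. This set has size $|S| \cdot |V(G')|$, and I will show that its PSD propagation time satisfies $\psdpts{G \Box G'}{T} \leq \psdpts{G}{S}$, which immediately yields
\begin{align*}
\psdthx(G \Box G') \leq |T|(1 + \psdpts{G \Box G'}{T}) \leq |V(G')| \cdot |S| \cdot (1 + \psdpts{G}{S}) = |V(G')| \cdot \psdthx(G).
\end{align*}
The other inequality then follows by the symmetric argument, swapping the roles of $G$ and $G'$.

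The central step is a parallel-forcing claim established by induction on the round number: if at some moment the blue set in $G \Box G'$ equals $B \times V(G')$ for some $B \subseteq V(G)$, then every PSD force $u \to v$ available in $G$ with blue set $B$ lifts to a simultaneous valid PSD force $(u, x') \to (v, x')$ in $G \Box G'$ for every $x' \in V(G')$. The structural observation I would use is that the induced subgraph on the white vertices $(V(G) \setminus B) \times V(G')$ is precisely the Cartesian product $(G - B) \Box G'$, whose connected components have the form $W_i \Box G'_j$ for components $W_i$ of $G - B$ and $G'_j$ of $G'$. Fix $x' \in V(G')$ and let $G'_j$ be its component. Since every $G'$-direction neighbor of $(u, x')$ has first coordinate $u \in B$ and is therefore blue, the white neighbors of $(u, x')$ inside the component $W_i \Box G'_j$ containing $(v, x')$ are exactly $\{(w, x') : w \in N_G(u) \cap W_i\}$; this set is a singleton precisely when $u \to v$ is a valid force in $G$. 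Iterating round by round then bounds the propagation time of $T$ by that of $S$.

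For tightness, the bound is attained already at $G = K_1$, $G' = K_n$: then $G \Box G' = K_n$, and since $\psdz(K_n) = n-1 \geq \frac{n}{2}$, \thref{obs:PSDextreme} gives $\psdthx(K_n) = n = \psdthx(K_1)\cdot |V(K_n)|$. The main obstacle is the component-structure verification in the parallel-forcing claim: one must check carefully that the residual graph really does factor as a Cartesian product, that no cross-copy connection merges the $x'$-slice with another slice inside the relevant component, and that the one-white-neighbor condition translates cleanly between $G$ and $G \Box G'$ so that the simultaneous forces in round $r$ of $G$ all lift to legitimate simultaneous forces in round $r$ of $G \Box G'$.
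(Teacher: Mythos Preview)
Your lifting argument for the inequality is correct and is the natural approach; the paper actually omits this part of the proof, citing an analogous argument in the literature, so your write-up is more complete on that score. One small point worth making explicit: your induction hypothesis is that the blue set in $G\Box G'$ \emph{equals} $B\times V(G')$, so for the induction to continue you must know that no extra forces occur in $G\Box G'$ beyond the lifted ones. Your own neighbor computation already yields this (any vertex forceable from $B\times V(G')$ is of the form $(v,x')$ with $v$ forceable from $B$ in $G$), but you should say so; otherwise the hypothesis is not obviously preserved from one round to the next.

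Where your proposal genuinely differs from the paper is the tightness example. Taking one factor equal to $K_1$ is degenerate: $K_1\Box G'\cong G'$, so the second displayed inequality becomes $\psdthx(G')\le\psdthx(G')$ and is tight for every $G'$, and the paper explicitly dismisses $G\Box K_1$ as the ``trivial example'' in the remark following \thref{prop:PSDptProduct2}. The paper instead exhibits the genuine product $P_2\Box P_4$, verifying $\psdthx(P_4)=3$ and $\psdthx(P_2\Box P_4)=6$: the latter because $\psdz(P_2\Box P_4)=2$ with optimal propagation time $2$, while any $3$-set still has propagation time at least $2$. You should replace the $K_1$ example with a non-degenerate one of this kind.
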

\begin{proof}
To see that this bound is tight, consider the grid $P_2\Box P_4$. Observe that $\psdthx(P_4) = 3$, and $\psdthx(P_2\Box P_4) = 6$. Note that there $\psdz(P_2\Box P_4) = 2$. In order to force, these two vertices must be adjacent to each other, and the optimal placement gives $\psdpt(P_2\Box P_4) = 2$. So, $\psdthx(P_2\Box P_4)\leq 6$. And if $S$ is a set of $3$ vertices, $\psdpt(P_2\Box P_4;S) \geq 2$. So, it must be that $\psdthx(P_2\Box P_4)= 6$.
\end{proof}


Let $G$ be a graph and $e= \{x,y\}$ an edge in $G$. We use $G_e$ to denote the graph $G$ with $e$ subdivided, and $G-e = (V(G),E(G)\setminus\{e\})$. 
 In \cite[Theorem 5.4]{ekstrand2013positive} it is shown that $\psdz(G) = \psdz(G_e)$ and the proof shows that any \psdset $S$ for $G$ is also a \psdset for $G_e$.  It is proved in \cite[Theorem 10.37]{book} that $\psdpt(G_e) \leq \psdpt(G)+1$ by using the same zero forcing set in $G$ and $G_e$. Since the proof holds no matter what PSD zero forcing set is used, we have that $\psdpts{G_e}{S} \leq \psdpts{G}{S}+1.$ So, we have the next lemma. 

\begin{lem}{\thlabel{lem:subdivision}}
Let $e = \{u,w\}$ be an edge of a graph $G$, $S$ a \psdset of $G$. Then $S$ is a \psdset of $G_e$ and 
$\psdpts{G_e}{S} \leq \psdpts{G}{S}+1.$
\end{lem}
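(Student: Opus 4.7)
The plan is to construct, from an optimal PSD forcing process on $G$ starting at $S$, an analogous PSD forcing process on $G_e$ starting at the same set $S$ that accomplishes the forcing in at most one additional round. Let $v$ denote the subdivision vertex, so that $V(G_e) = V(G)\cup\{v\}$ and the edges of $G_e$ are $(E(G)\setminus\{uw\})\cup\{uv,vw\}$. Set $p=\psdpts{G}{S}$ and let $B_0=S,B_1,\ldots,B_p=V(G)$ denote the successive blue sets produced by forcing on $G$.

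First I would compare the components of $G-S$ with those of $G_e-S$: they agree except that the component of $G-S$ containing $u$ or $w$ (if either is outside $S$) is enlarged to include $v$, and if both $u,w\in S$ then $\{v\}$ is a new singleton component. Using this, I would establish the base case that if at least one of $u,w$ lies in $S$, then $v$ is forced blue in round $1$ of $G_e$ (in the case $u,w\in S$ this is immediate since $\{v\}$ is a singleton component in which $v$ is the unique white neighbor of $u$; in the case $u\in S$, $w\notin S$, the argument requires more care since $u$ could have both $v$ and $w$ as white neighbors).

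Next I would argue inductively that after round $i+1$ of $G_e$, the blue set contains $B_i\cup\{v\}$. Most forces from the $G$-process carry over verbatim, because $v$ is the only new vertex and the only changed edge is $uw$, so a force $a\to b$ with $\{a,b\}\neq\{u,w\}$ still satisfies the unique-white-neighbor condition in the corresponding $G_e$-component whenever it did in $G$. The delicate case is when the $G$-process uses the force $u\to w$ (or $w\to u$) in some round $j+1$: the edge $uw$ is absent in $G_e$, so I would simulate this single force by a pair of forces $u\to v$ in round $j+1$ and $v\to w$ in round $j+2$, which accounts for precisely the one round of delay. The second of these is immediate because $v$'s only neighbors in $G_e$ are $u$ and $w$, so once $u$ is blue and $v$ is blue, $w$ is the unique white neighbor of $v$ in its component.

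The main obstacle is verifying the unique-white-neighbor condition throughout the simulation: the insertion of $v$ could in principle create an extra white neighbor for $u$ or $w$ that spoils some force. I would handle this by a case analysis on the forcer, noting that only $u$ and $w$ can see $v$ as a new neighbor, and in both cases the early forcing of $v$ (either as a singleton component in round $1$, or via the $u\to v$ step described above) ensures $v$ is blue in time to avoid conflicts with the $G$-process forces at $u$ or $w$. Setting $i=p$ in the inductive claim yields $V(G_e)=B_p\cup\{v\}\subseteq B_{p+1}'$, proving simultaneously that $S$ is a PSD forcing set of $G_e$ and that $\psdpts{G_e}{S}\leq p+1=\psdpts{G}{S}+1$.
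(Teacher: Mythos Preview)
The paper does not give its own proof of this lemma: in the paragraph immediately preceding it, the author simply notes that \cite[Theorem~5.4]{ekstrand2013positive} already shows every PSD forcing set of $G$ is one of $G_e$, and that the proof of \cite[Theorem~10.37]{book} (which gives $\psdpt(G_e)\le\psdpt(G)+1$) works for an arbitrary PSD forcing set $S$, yielding $\psdpts{G_e}{S}\le\psdpts{G}{S}+1$. So your direct simulation argument is not what the paper does; it is, however, essentially the content of the cited proofs.

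Your overall strategy is correct, but the inductive invariant you state is not. You claim that after round $i+1$ in $G_e$ the blue set contains $B_i\cup\{v\}$. For $i=0$ this would require $v\in B_1'$, i.e.\ that $v$ is forced in the very first round. But the only neighbors of $v$ are $u$ and $w$, so if $u,w\notin S$ then $v$ certainly is not forced in round~$1$; and even when $u\in S$, $w\notin S$, the vertex $u$ may have white $G$-neighbors other than $w$ in $v$'s component of $G_e-S$, so $u$ cannot force $v$. (Your parenthetical ``$u$ could have both $v$ and $w$ as white neighbors'' is off: $u$ is not adjacent to $w$ in $G_e$; the real obstruction is other $G$-neighbors of $u$.) Thus the base case fails in general, and the later discussion of the ``delicate case'' $u\to w$ at round $j+1$ tacitly contradicts the invariant you are trying to prove.

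The fix is simple: drop $\{v\}$ from the invariant and prove $B_i\subseteq B_{i+1}'$ for all $i$. For forces $a\to b$ with $\{a,b\}\neq\{u,w\}$ one checks (using that $a$ is not adjacent to $v$, or that $v$ cannot lie in $b$'s component) that the force is still valid in $G_e$ one round later; if $u\to w$ occurs at round $j$ in $G$, then $u\to v$ is valid at round $j$ in $G_e$ and $v\to w$ at round $j+1$, giving $w\in B_{j+1}'$. If the edge $uw$ is never used as a force in $G$, then in fact $B_i\subseteq B_i'$ throughout, and $v$ is forced in the single extra round $p+1$ once both $u$ and $w$ are blue. Either way $V(G_e)\subseteq B_{p+1}'$.
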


Next, we make use of the previous lemma to present a tight bound for edge subdivision. 
\begin{prop}\thlabel{prop:PSDsubdivision} Let $e$ be an edge of a graph $G$, and suppose $\psdthx(G) = \psdthx(G;S)$. Then
\begin{align*}
    \psdthx(G_e)&\leq \min\{|S|(2+ \psdpt(G;S)), (|S|+1)(1+ \psdpt(G;S))\}\\
& = \min\{\psdthx(G) +|S|, \psdthx(G) + \psdpt(G;S) +1\}.
\end{align*}
This bound is tight.
\end{prop}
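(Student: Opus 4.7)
The plan is to prove the two candidates in the minimum separately by exhibiting two different PSD forcing sets for $G_e$, and then to verify tightness on a subdivided path using \thref{cor:trees}. The bound $\psdthx(G_e)\leq |S|(2+\psdpt(G;S))$ is immediate from \thref{lem:subdivision}: the same set $S$ remains a PSD forcing set of $G_e$ with $\psdpts{G_e}{S}\leq \psdpts{G}{S}+1$, so $\psdthx(G_e)\leq \psdthxs{G_e}{S}\leq |S|(2+\psdpt(G;S))$.

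For the bound $\psdthx(G_e)\leq (|S|+1)(1+\psdpt(G;S))$, write $e=\{u,w\}$ and let $v$ be the new subdivision vertex of $G_e$. I would take $T=S\cup\{v\}$ as a PSD forcing set of $G_e$; the core of the argument is to show $\psdpts{G_e}{T}\leq \psdpt(G;S)$. I plan to prove by induction on $r$ that every vertex $x\in V(G)$ with $\rd_G(x)\leq r$ lies in $T^{[r]}$. The base case $r=0$ follows from $S\subseteq T$. For the inductive step, fix a force $a\to x$ used in $G$ at round $r+1$, so that $a\in S^{[r]}\subseteq T^{[r]}$. If $ax\neq e$, then $a$ is still adjacent to $x$ in $G_e$; since $v\in T$ contributes no white neighbor to $a$, and since the component of $G_e-T^{[r]}$ containing $x$ refines the component of $G-S^{[r]}$ containing $x$ (edges of $G_e-T^{[r]}$ form a subset of edges of $G-S^{[r]}$ with $e$ removed, and $T^{[r]}\cap V(G)\supseteq S^{[r]}$), the only white neighbor of $a$ in the induced subgraph remains $x$, so $a$ forces $x$ at round $r+1$ in $G_e$. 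If $ax=e$, then $v$'s only $G_e$-neighbors are $u$ and $w$; once $a$ becomes blue in $G_e$ (which occurs by round $r$ via the inductive hypothesis), $v$'s only possibly-white neighbor is $x$, so $x\in T^{[r+1]}$. Multiplying the resulting propagation bound by $|T|=|S|+1$ completes this bound.

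For tightness, I would take $G=P_3$ with $S$ the central vertex, so $|S|=1$ and $\psdpt(G;S)=1$; subdividing either edge produces $P_4$, and \thref{cor:trees} gives $\psdthx(P_4)=1+\lc 3/2\rc=3$, matching $\min\{1\cdot 3,\,2\cdot 2\}=3$. The step I expect to be most delicate is the componentwise check when $ax\neq e$: one must verify that refining the white components (by removing $e$ and by possibly having more vertices already blue in $G_e$ than in $G$) does not leave $a$ with an additional white neighbor inside its new component. This hinges on the PSD rule requiring $x$ to be the unique white neighbor of $a$ only \emph{within the component containing $x$}, together with the inductive fact $T^{[r]}\cap V(G)\supseteq S^{[r]}$, which can only remove white vertices from consideration rather than introduce new ones.
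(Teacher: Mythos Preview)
Your argument is correct and follows the same overall strategy as the paper: use \thref{lem:subdivision} for the first term of the minimum, and use $S\cup\{v\}$ for the second. Where the paper simply cites \cite{book} for $\psdpts{G_e}{S\cup\{v\}}\leq\psdpts{G}{S}$, you supply a direct inductive proof, and your componentwise check is sound (the key point being $T^{[r]}\cap V(G)\supseteq S^{[r]}$ together with $v\in T$, so white components only shrink).

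The one substantive difference is in the tightness discussion. Your $P_3\to P_4$ example is valid and shows the \emph{minimum} is attained, but only the first term $|S|(2+\psdpt(G;S))$ is achieved there. The paper goes further and exhibits a separate family where the \emph{second} term is the achiever: for $G=K_n$ with $n\geq 4$, any edge $e$ gives $\psdz(G_e)=n-1\geq |V(G_e)|/2$, so $\psdthx(G_e)=n+1=\psdthx(G)+\psdpt(G;S)+1$. If you want to match the paper's claim that ``both of these bounds are tight,'' you should add such an example; if you only intend tightness of the stated minimum, your $P_3$ example already suffices.
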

\begin{proof}
By \thref{lem:subdivision} $S$ is a \psdset of $G_e$ and $\psdpts{G_e}{S}\leq \psdpts{G}{S} +1$. So $\psdthx(G_e)\leq |S|(2+ \psdpt(G;S))$.

Let $S' = S\cup\{v_e\}$. The proof that $\psdth(G)\leq \psdth(G_e)$ in \cite{book}, shows that $\psdpt(G_e,S')\leq \psdpt(G;S)$. Thus, $\psdthx{G_e} \leq \psdthxs{G_e}{S} = |S'|(1+\psdpts{G_e}{S'} )\leq (|S|+1)(1+\psdpts{G}{S}).$

Note that the statement can be restated as $\psdthx(G_e)\leq |S|(2+ \psdpt(G;S))$ and $\psdthx(G_e)\leq (|S|+1)(1+ \psdpt(G;S))$. We show that both of these bounds are tight. For the first inequality, consider the tree with $7$ vertices in Figure \ref{fig:operation}. By \thref{rem:lowbounds}, $\psdthx(G)= 3$, and if we subdivide an edge adjacent to a leaf, $\psdthx(G_e)= 4$. For the second inequality, consider let $G = K_n$ for $n\geq 4$ and $e$ be any edge in $G$. Observe that $\psdz(G_e) = |V(G_e)|- 2 \geq \frac{|V(G_e)|}{2}$, and $\psdthx(G_e) = |V(G_e)| = |V(G)|+1 = \psdthx(G) + 1$.  
\end{proof}
The next operation is edge deletion.   
\begin{prop} Let $G$ be a graph and $e$ be an edge of $G$, and suppose $\psdthx(G) = \psdthx(G;S)$. Then
$$\psdthx(G-e)\leq (|S|+1)(1+ \psdpt(G;S)) = \psdthx(G) + 1 + \psdpt(G;S).$$
\end{prop}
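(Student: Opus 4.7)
The plan is to build, from $S$, a PSD forcing set $S^*$ of $G-e$ of cardinality at most $|S|+1$ and propagation time at most $\psdpt(G;S)$, which immediately yields
\[
\psdthx(G-e) \le |S^*|\bigl(1+\psdpts{G-e}{S^*}\bigr) \le (|S|+1)\bigl(1+\psdpt(G;S)\bigr) = \psdthx(G) + 1 + \psdpt(G;S).
\]

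Write $e=\{a,b\}$ and fix a set of forces $\mathcal{F}$ for $S$ in $G$. Since the PSD forcing tree covering induced by $\mathcal{F}$ assigns each edge of $E(\mathcal{F})$ to exactly one directed force, at most one force in $\mathcal{F}$ has underlying edge $e$. If some $u_0\to w_0\in\mathcal{F}$ uses $e$, let $S^*=S\cup\{w_0\}$ (observing that $w_0$ is forced rather than initial, so $w_0\notin S$) and let $\mathcal{F}'=\mathcal{F}\setminus\{u_0\to w_0\}$; otherwise let $S^*=S\cup\{a\}$ and $\mathcal{F}'=\mathcal{F}$. In both cases $|S^*|\le|S|+1$.

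The crux of the argument is showing that $\mathcal{F}'$, executed with the same round assignment it carried inside $\mathcal{F}$, realizes a valid PSD forcing schedule for $S^*$ in $G-e$. I would prove this by induction on the round index $i$, maintaining the invariant $B^*_i\supseteq B_i\cup\{w_0\}$ (respectively $B_i\cup\{a\}$), where $B_i$ and $B^*_i$ denote the blue sets after round $i$ in $G$ and in $G-e$. For a force $u\to w\in\mathcal{F}'$ to be applied at round $i$ with $w\notin B^*_{i-1}$, the component $W^*$ of $(G-e)-B^*_{i-1}$ containing $w$ is contained in the corresponding component $W$ of $G-B_{i-1}$, because $B^*_{i-1}\supseteq B_{i-1}$ and we have also deleted an edge. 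Therefore $u$'s neighbors in $W^*$ inside $G-e$ form a subset of its neighbors in $W$ inside $G$, which by hypothesis equals $\{w\}$, so the unique-white-neighbor condition carries over, provided $w$ itself remains a neighbor of $u$ after deletion of $e$.

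The main obstacle is precisely this last point: ensuring that deleting $e$ does not destroy the edge underlying the force being simulated. This would fail only if $\{u,w\}=e$; but by construction the only force in $\mathcal{F}$ with underlying edge $e$ was $u_0\to w_0$, and it has been discarded when forming $\mathcal{F}'$. Hence every surviving force has underlying edge different from $e$, the inductive step closes, and the invariant at $i=\psdpt(G;S)$ gives $B^*_{\psdpt(G;S)}=V(G)$, so $\psdpts{G-e}{S^*}\le\psdpt(G;S)$, finishing the proof.
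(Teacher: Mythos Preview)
Your proof is correct and follows essentially the same strategy as the paper: augment $S$ by one endpoint of $e$ so that no surviving force in $\mathcal{F}$ has underlying edge $e$, then argue that the remaining forces remain valid in $G-e$ with propagation time at most $\psdpt(G;S)$. Your inductive verification that each force carries over (using $B^*_{i-1}\supseteq B_{i-1}$ and the shrinking of components under edge deletion) is more explicit than the paper's terse argument, and your case split on whether $e$ underlies some force is a harmless variant of the paper's choice to always add the later-colored endpoint.
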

\begin{proof}
Suppose $e = \{u,v\}$. Construct a set of forces $\mathcal{F}$ for $S$ in $G$. Suppose without loss of generality that the force in which $u$ is colored blue appears before $v$. Let $S' = S \cup \{v\}$. We create a set of forces $\mathcal{F}'$ for $S'$ in $G$ by simply removing the force that in which $v$ is colored blue. Thus, $\psdpts{G}{S'}\leq \psdpts{G}{S}$ and $\psdthx(G-e)\leq (|S|+1)(1+ \psdpt(G;S)) = \psdthx(G) + 1 + \psdpt(G;S)$.
To see that this is tight, consider the tree with $7$ vertices in Figure \ref{fig:operation}. Let $e = uv$ be an edge such that $v$ is a leaf in $G$. Then $\psdthx(G)= 3$ (\thref{rem:lowbounds}) and $\psdthx(G-e)= 6$.
\end{proof}
\begin{figure}[h!]
    \centering
    \begin{tabular}{ccc}
        \begin{tikzpicture}[shorten >=8pt, shorten <=8pt, scale=0.99]
        \node[circle, draw=black, fill=pmu] at (0,0) {$ $}; 
        \node[circle, draw=black, fill=white] at (-1,-1) {$ $}; 
        \node[circle, draw=black, fill=white,label=left:{\small $u$}] at (1,-1) {$ $}; 
        \node[circle, draw=black, fill=white] at (-1.5,-2) {$ $}; 
        \node[circle, draw=black, fill=white] at (-.5,-2) {$ $}; 
        \node[circle, draw=black, fill=white] at (1.5,-2) {$ $}; 
        \node[circle, draw=black, fill=white, label=below:{\small $v$}] at (.5,-2) {$ $}; 
         \draw[-, thick] (0,0) -- (-1,-1);
         \draw[-, thick] (0,0) -- (1,-1);
         \draw[-, thick] (-1,-1) -- (-1.5,-2);
         \draw[-, thick] (-1,-1) -- (-.5,-2);
         \draw[-, thick] (1,-1) -- (1.5,-2);
         \draw[-, thick] (1,-1) -- (.5,-2);

        \end{tikzpicture}
        &
        \begin{tikzpicture}[shorten >=8pt, shorten <=8pt, scale=0.99]
            \node[circle, draw=black, fill=pmu] at (0,0) {$ $}; 
            \node[circle, draw=black, fill=white] at (-1,-1) {$ $}; 
            \node[circle, draw=black, fill=white,label=left:{\small $u$}] at (1,-1) {$ $}; 
            \node[circle, draw=black, fill=white] at (-1.5,-2) {$ $}; 
            \node[circle, draw=black, fill=white] at (-.5,-2) {$ $}; 
            \node[circle, draw=black, fill=white] at (1.5,-2) {$ $}; 
            \node[circle, draw=black, fill=white] at (.5,-2) {$ $}; 
            \node[circle, draw=black, fill=white,,label=below:{\small $v$}] at (.5,-3) {$ $};
             \draw[-, thick] (0,0) -- (-1,-1);
             \draw[-, thick] (0,0) -- (1,-1);
             \draw[-, thick] (-1,-1) -- (-1.5,-2);
             \draw[-, thick] (-1,-1) -- (-.5,-2);
             \draw[-, thick] (1,-1) -- (1.5,-2);
             \draw[-, thick] (1,-1) -- (.5,-2);
             \draw[-, thick] (.5,-2) -- (.5,-3);
    
            \end{tikzpicture}
            &
        \begin{tikzpicture}[shorten >=8pt, shorten <=8pt, scale=0.99]
            \node[circle, draw=black, fill=pmu] at (0,0) {$ $}; 
            \node[circle, draw=black, fill=white] at (-1,-1) {$ $}; 
            \node[circle, draw=black, fill=white,label=left:{\small $u$}] at (1,-1) {$ $}; 
            \node[circle, draw=black, fill=white] at (-1.5,-2) {$ $}; 
            \node[circle, draw=black, fill=white] at (-.5,-2) {$ $}; 
            \node[circle, draw=black, fill=white] at (1.5,-2) {$ $}; 
            \node[circle, draw=black, fill=pmu,label=left:{\small $v$}] at (.5,-2) {$ $}; 
            
             \draw[-, thick] (0,0) -- (-1,-1);
             \draw[-, thick] (0,0) -- (1,-1);
             \draw[-, thick] (-1,-1) -- (-1.5,-2);
             \draw[-, thick] (-1,-1) -- (-.5,-2);
             \draw[-, thick] (1,-1) -- (1.5,-2);
        \end{tikzpicture}
        \\
        A tree $T$ with $\psdthx(T) = 3$. & $T_{uv}$ with $\psdthx(T_{uv}) = 4$. & $T-uv$ with $\psdthx(T-{uv}) = 6$.

    \end{tabular}
    \caption{A tree $T$ with $7$ vertices, its subdivision, and an edge deletion.}\label{fig:operation}
\end{figure}
\pagebreak
    
      \section{Special Graphs}\label{sec:specialgraphs} 
We now determine the initial cost PSD product throttling number for serveral families of graphs. These results are summarzed in Table \ref{table:graphfamilies}.

In order to find the initial cost PSD product throttling number for cycles, we need some arithmetic involving ceilings and floors presented in \thref{lem:cycle_floors} and \thref{cycle_floors2}.
\begin{lem}\thlabel{lem:cycle_floors}
Let $k\geq 4$ and $n$ a positive integer. Then 
$$k \lp   1+\lc \frac{n-k}{2k}\rc  \rp   \geq 2+2\lc \frac{n-2}{4}\rc.$$
\end{lem}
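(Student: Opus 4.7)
My plan is first to collapse each side into a single ceiling by absorbing the additive constants. Using the identity $c + c\lc x\rc = c\lc x+1\rc$ with $c=k$ on the left and $c=2$ on the right reduces the claim to
\[
k\lc \frac{n+k}{2k}\rc \;\geq\; 2\lc \frac{n+2}{4}\rc.
\]

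Next I strip the dependence on $k$ from the left-hand side. Because $\lc y\rc \geq y$, multiplying by $k$ yields $k\lc (n+k)/(2k)\rc \geq (n+k)/2$; and since the left side is a positive integer, this strengthens to $k\lc (n+k)/(2k)\rc \geq \lc (n+k)/2\rc$, which by monotonicity of the ceiling and $k\geq 4$ is at least $\lc (n+4)/2\rc$. It therefore suffices to prove the $k$-free inequality
\[
\lc \frac{n+4}{2}\rc \;\geq\; 2\lc \frac{n+2}{4}\rc
\]
for every positive integer $n$. I will dispatch this by a four-way case split on $n \bmod 4$: writing $n = 4q+r$ with $r\in\{0,1,2,3\}$, both sides reduce to explicit linear expressions in $q$ and the comparison becomes routine, with equality precisely when $r\in\{0,3\}$.

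The only subtlety is that the integer-rounding step $k\lc (n+k)/(2k)\rc \geq \lc (n+k)/2\rc$ is essential: without it, for $k=4$ the real-valued bound $(n+k)/2 = (n+4)/2$ falls short of the right-hand side whenever $n \equiv 3 \pmod 4$, where the right-hand side equals $(n+5)/2$. Rounding up to the nearest integer exactly closes this half-unit gap, and the equality cases $r\in\{0,3\}$ in the final residue check confirm that this reduction is sharp, so no cruder bound on the left-hand side would succeed.
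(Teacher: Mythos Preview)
Your proof is correct. Both your argument and the paper's begin by pushing the multiplicative $k$ through the ceiling to obtain the bound $k\lc (n+k)/(2k)\rc \geq \lc (n+k)/2\rc$ (the paper writes this as $k + \lc (n-k)/2\rc$, which is the same quantity). The difference lies in what happens next. The paper immediately discards integrality and works with the real-valued bound $(n+k)/2 = k/2 + n/2$; as you yourself observe, this bound is too weak when $k=4$ and $n$ is odd, and the paper accordingly splits off a separate case analysis for $k=4$ with $n$ odd, plus another case for $k\geq 5$ with $n$ odd, on top of the even-$n$ case. You instead retain integrality, specialize $k\geq 4$ to get $\lc (n+4)/2\rc$, and finish with a single four-way residue check modulo $4$. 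Your route is a bit more streamlined: one uniform reduction followed by one short case split, rather than three parallel cases one of which is left to the reader.
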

\begin{proof}
We first observe that 
\begin{align*}
    k \lp   1+\lc \frac{n-k}{2k}\rc  \rp   & \geq  \lp  k+\lc \frac{n-k}{2}\rc  \rp   
    \geq k+ \frac{n-k}{2}  
    = \frac{k}{2} + \frac{n}{2}. 
\end{align*}
If $n = 2i$ for $i\geq 1$, then 
\begin{align*}
    \frac{k}{2} + \frac{n}{2}
     \geq 2+ i
     \geq 2+2\lc \frac{i-1}{2}\rc 
     = 2+2\lc \frac{2i-2}{4}\rc 
     = 2+2\lc \frac{n-2}{4}\rc.
\end{align*}

Now, if $n= 2i+1$ for $i\geq1$, and $k = 4$, through a straightforward case analysis we can see that
$$ 4 \lp   1+\lc \frac{n-4}{8}\rc  \rp   \geq 2+2\lc \frac{n-2}{4}\rc .$$
Suppose next, that $k\geq 5$ and $n = 2i+1$ then we have the following:
\begin{align*}
     \frac{k}{2} + \frac{n}{2}
     \geq \frac{5}{2} + \frac{n}{2}
     = 2+ i + 1
     \geq 2+2\lc \frac{i-\frac{1}{2}}{2}\rc 
     = 2+2\lc \frac{n-2}{4}\rc. \hspace{2cm}  \qedhere
\end{align*}
\end{proof}

\begin{lem}\thlabel{cycle_floors2}
Let $n$ be a positive integer. Then
   $$3 \lp   1+\lc \frac{n-3}{6}\rc  \rp  < 
   2+2\lc \frac{n-2}{4}\rc \text{ for $n\equiv 3 \mod 12$ }$$
   and
   $$3 \lp   1+\lc \frac{n-3}{6}\rc  \rp \geq 2+2\lc \frac{n-2}{4}\rc \text{ for $n\not\equiv 3 \mod 12$}.$$
\end{lem}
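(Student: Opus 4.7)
The plan is to reduce the inequality to a finite check modulo 12, since 12 is the least common multiple of the denominators 6 and 4 governing the two ceiling expressions. First I would write $n = 12t + r$ with $t \geq 0$ an integer and $r \in \{0, 1, \dots, 11\}$, then pull the integer contribution out of each ceiling, obtaining $\lc (n-3)/6 \rc = 2t + \lc (r-3)/6 \rc$ and $\lc (n-2)/4 \rc = 3t + \lc (r-2)/4 \rc$. Substituting into both sides of the claim, the $6t$ contribution appears on both sides and cancels, so the problem reduces to comparing $3 + 3\lc (r-3)/6 \rc$ with $2 + 2\lc (r-2)/4 \rc$ for the twelve residues $r \in \{0, 1, \dots, 11\}$.

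Next I would organize the verification into a short table recording $\lc (r-3)/6 \rc$ and $\lc (r-2)/4 \rc$ for each $r$. The reduced left-hand side takes the sequence of values $(3,3,3,3,6,6,6,6,6,6,9,9)$ as $r$ ranges over $0,\dots,11$, while the reduced right-hand side takes $(2,2,2,4,4,4,4,6,6,6,6,8)$. Comparing entry by entry, the left-hand side strictly exceeds the right-hand side for $r \in \{0,1,2,4,5,6,10,11\}$, the two sides are equal for $r \in \{7,8,9\}$, and the only residue at which the left-hand side is strictly smaller is $r = 3$, which corresponds exactly to $n \equiv 3 \Mod{12}$. This establishes both parts of the statement simultaneously.

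The proof has no conceptual difficulty; the principal source of error is bookkeeping the ceilings of negative and half-integer values (for instance $\lc -3/6 \rc = 0$ and $\lc -2/4 \rc = 0$ at $r = 0$), and confirming that the extraction of $2t$ and $3t$ from the ceilings is valid uniformly in $t \geq 0$, including for the small values $n = 1, 2$. Conceptually, the exceptional residue $3 \Mod{12}$ arises because it is precisely the residue at which the denominator-4 ceiling has already jumped one step while the denominator-6 ceiling has not yet jumped, giving the right-hand side a temporary advantage of $2$ over the pattern observed in the other residue classes.
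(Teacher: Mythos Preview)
Your proof is correct and follows essentially the same approach as the paper: both write $n=12t+r$, extract the integer multiples of $t$ from each ceiling to cancel the $6t$ on both sides, and then verify the reduced inequality residue by residue. The paper groups the residues into the ranges $\{0,1,2\}$, $\{3\}$, $\{4,5,6\}$, $\{7,8,9,10\}$, $\{11\}$, while you record the twelve values in a table, but the arithmetic and the conclusions coincide.
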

\begin{proof}
Let $n = 12i +j$ for $0\leq j \leq 11$. We first simplify our expressions. Observe that
\begin{align*}
     3 \lp 1+\lc\frac{n-3}{6}\rc \rp   = 3 \lp   1+\lc \frac{12i+j-3}{6}\rc  \rp   = 3 +3\lc  2i + \frac{j-3}{6}\rc = 3 +6i + 3\lc \frac{j-3}{6}\rc
\end{align*}
and that
\begin{align*}
    2 \lp   1+\lc \frac{n-2}{4}\rc  \rp   = 2 \lp   1+\lc \frac{12i+j-2}{4}\rc  \rp   = 2 +2\lc  3i + \frac{j-2}{4}\rc = 2 +6i +2\lc \frac{j-2}{4}\rc .
\end{align*}

Suppose $0\leq j\leq 2$. Then we have that $-1 < \frac{j-3}{6}<0$ and $-1 < \frac{j-2}{4}\leq 0$. And so,
\begin{align*}
   3 +6i + 3\lc \frac{j-3}{6}\rc  &= 3 + 6i \geq 2 +6i = 2 +6i +2\lc \frac{j-2}{4}\rc .
\end{align*}

Next, suppose $j = 3$, then
\begin{align*}
   3 +6i + 3\lc \frac{j-3}{6}\rc    =  3 +6i
   < 2 +6i +2
  =2 +6i +2\lc \frac{j-2}{4}\rc.
\end{align*}

 Next, let $4\leq j\leq 6$. Then we have that $0 < \frac{j-3}{6}< 1$ and $0 < \frac{j-2}{4}\leq 1$. And so, 
    \begin{align*}
        3 +6i + 3\lc \frac{j-3}{6}\rc    =
        3+6i +3 > 2+6i +2 = 2 +6i +2\lc \frac{j-2}{4}\rc.
    \end{align*}

Next, let $7\leq j\leq 10$. Then we have that $0 < \frac{j-3}{6}< 2$ and $1 < \frac{j-2}{4}\leq 2$. And so, 
    \begin{align*}
        3 +6i + 3\lc \frac{j-3}{6}\rc  \geq 3 + 6i +3 = 2 +6i +4 = 2 +6i +2\lc \frac{j-2}{4}\rc.
    \end{align*}
     Finally, let $j = 11$. Then we have that $1 < \frac{j-3}{6}< 2$ and $2 < \frac{j-2}{4}< 3$. And so, 
    \begin{align*}
        3 +6i + 3\lc \frac{j-3}{6}\rc    = 3 + 6i +6 > 2 +6i +6 = 2 +6i +2\lc \frac{j-2}{4}\rc. \hspace{2cm} \qedhere
    \end{align*}
\end{proof}

We are now ready to establish the value of $\psdthx \lp  C_n \rp$. 
\begin{thm}\thlabel{thm:cycles} Let $n\geq 4$. The initial value positive semidefinite product throttling number of a cycle $C_n$ is 
$$\psdthx\lp   C_n \rp  = \begin{cases}
3 \lp   1+\lc \frac{n-3}{6}\rc  \rp   & n = 12i+3, i\geq1 \\
2 \lp   1+\lc \frac{n-2}{4}\rc  \rp   & \text{ otherwise.}
\end{cases}$$
\end{thm}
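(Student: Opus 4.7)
The plan is to combine \thref{prop:cops_tree_cycle} with an explicit formula for $\psdpt(C_n,k)$ and then apply the two preparatory lemmas. First, I would check that $\psdz(C_n)=2$ for $n\geq 4$: a single blue vertex $v$ has both of its neighbors in the unique component of $C_n-\{v\}$, so it cannot force, whereas two appropriately spaced blue vertices can. I would then show that for every $k$ with $2\leq k<n$,
\[
\psdpt(C_n,k)=\lc(n-k)/(2k)\rc.
\]
For the upper bound, place the $k$ initial blue vertices so that the $k$ resulting white arcs differ in size by at most one; the largest arc has $\lc(n-k)/k\rc$ white vertices and is filled inward from its two endpoints in $\lc(n-k)/(2k)\rc$ rounds. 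For the matching lower bound, pigeonhole forces some arc of size at least $\lc(n-k)/k\rc$, and in any single round only the two blue-adjacent endpoints of an arc can perform PSD forces into that arc, so clearing it requires at least $\lc(n-k)/(2k)\rc$ rounds. Consequently
\[
\psdthx(C_n,k)=k\bigl(1+\lc(n-k)/(2k)\rc\bigr)\quad\text{for }2\leq k<n,
\]
while $\psdthx(C_n,n)=n$.

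Second, I would apply \thref{lem:cycle_floors} to eliminate every $k\geq 4$ from the optimization: that lemma is exactly the statement $k\bigl(1+\lc(n-k)/(2k)\rc\bigr)\geq 2\bigl(1+\lc(n-2)/4\rc\bigr)=\psdthx(C_n,2)$ for $k\geq 4$, and specializing $k=n$ in the same lemma yields $\psdthx(C_n,n)=n\geq\psdthx(C_n,2)$ whenever $n\geq 4$. Therefore
\[
\psdthx(C_n)=\min\bigl\{\psdthx(C_n,2),\,\psdthx(C_n,3)\bigr\}.
\]

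Finally, I would invoke \thref{cycle_floors2}, which compares these two remaining candidates: $3(1+\lc(n-3)/6\rc)<2(1+\lc(n-2)/4\rc)$ exactly when $n\equiv 3\pmod{12}$, and the reverse (weak) inequality holds otherwise. Since the hypothesis $n\geq 4$ forces the congruence class $n\equiv 3\pmod{12}$ to be realized only by $n=12i+3$ with $i\geq 1$, combining the three steps gives the piecewise formula in the theorem. The main obstacle is the first step, specifically the lower-bound half of $\psdpt(C_n,k)=\lc(n-k)/(2k)\rc$: once one observes that only the two endpoints of each white arc can PSD-force into the arc in a single round, the rest of the argument reduces to the arithmetic already packaged in \thref{lem:cycle_floors} and \thref{cycle_floors2}.
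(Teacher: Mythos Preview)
Your proposal is correct and follows the same approach as the paper: derive $\psdpt(C_n,k)=\lceil(n-k)/(2k)\rceil$ and then apply \thref{lem:cycle_floors} and \thref{cycle_floors2} to reduce the minimization over $k$ to the comparison of $k=2$ versus $k=3$. The only cosmetic differences are that the paper dismisses large $k$ (namely $k>\lceil n/3\rceil$) directly rather than by specializing \thref{lem:cycle_floors} at $k=n$, and your opening reference to \thref{prop:cops_tree_cycle} is never actually used in the argument.
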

\begin{proof}
We first show that $\psdthx\lp   C_n,k \rp  = k \lp   1+\lc \frac{n-k}{2k}\rc  \rp  $. To see this, let $S$ be the initial set of $k$ vertices, and observe that $C_n-S$ is a union of disjoint paths. For $\lc\frac{n}{3}\rc < k< n$ it is immediate that $\psdthx\lp C_n,k \rp >\psdthx\lp C_n,\lc\frac{n}{3}\rc \rp$. So assume $k \leq \lc\frac{n}{3}\rc$.
In the propagation step the endpoints of each path are adjacent in $C_n$ to a different vertex in $S$. Therefore, $S^{(1)}$ will contain the endpoints of each path. To color each path blue, we traverse the path from the endpoints, which gives us a propagation of $ \lc \frac{\ell}{2}\rc$ for each path, where $\ell$ is the order of each path. Therefore, the propagation time will be determined by the largest $\ell$. Placing the original set $S$ so as to minimize longest path, we conclude that $\psdpt \lp   C_n, k \rp   = \lc \frac{n-k}{2k}\rc$.
Since $\psdz \lp  C_n  \rp   \geq 2$, we have reduced the problem to finding $k\geq 2$ that minimizes $k \lp   1+\lc \frac{n-k}{2k}\rc  \rp  $.

By \thref{lem:cycle_floors} and \thref{cycle_floors2}, we see that if $k\geq 4$ or $k =3$ or $n \not \equiv 3 \mod 12$, then
$$ k \lp   1+\lc \frac{n-k}{2k}\rc  \rp   \geq 2+2\lc \frac{n-2}{4}\rc.$$ 

If $n \equiv 3 \mod 12$, then $$3 \lp   1+\lc \frac{n-3}{6}\rc  \rp < 2 \lp   1+\lc \frac{n-2}{4}\rc  \rp  \leq k\lp   1+\lc \frac{n-k}{2k}\rc  \rp$$ for $k \geq 4.$ 
\end{proof}

Using \thref{prop:cops_tree_cycle} and \thref{thm:cycles}, we have the next corollary.
\begin{cor}
Let $n\geq 4$. The cop product throttling number of cycles is
$$\operatorname{th}_c^\times \lp   C_n \rp  = \begin{cases}
3 \lp   1+\lc \frac{n-3}{6}\rc  \rp   & n = 12i+3, i\geq1 \\
2 \lp   1+\lc \frac{n-2}{4}\rc  \rp   & \text{ otherwise}.
\end{cases}$$
\end{cor}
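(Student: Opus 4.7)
The plan is to invoke the two results cited just before the corollary; no new argument is needed. First I would apply Theorem \ref{prop:cops_tree_cycle}, which asserts that for any cycle $C_n$ and any initial set $S\subseteq V(C_n)$ we have $\psdpts{C_n}{S}=\capt(C_n;S)$, and consequently $\psdthxs{C_n}{S}=\thcx(C_n;S)$ and $\psdthx(C_n)=\thcx(C_n)$. Since the equality of the set-level quantities transfers to the minimum over all admissible $S$, the global identity $\thcx(C_n)=\psdthx(C_n)$ holds.

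Once this identity is in hand, I would substitute the explicit value of $\psdthx(C_n)$ furnished by Theorem \ref{thm:cycles}. The case split $n=12i+3$ versus all other residues is already baked into that formula, so it carries over to $\thcx(C_n)$ verbatim and yields the stated piecewise expression. There is no genuine obstacle: the computational work, including the optimization over $k$ handled via Lemmas \ref{lem:cycle_floors} and \ref{cycle_floors2}, was done in the proof of Theorem \ref{thm:cycles}, and the role of this corollary is merely to record the cop-throttling translation. The only subtlety worth flagging explicitly is that Theorem \ref{prop:cops_tree_cycle} gives the equality per initial set, not merely as a coincidence of minima, which is what licenses replacing $\psdthx$ by $\thcx$ without re-optimizing.
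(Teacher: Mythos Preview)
Your proposal is correct and mirrors the paper's own reasoning: the corollary is stated as an immediate consequence of \thref{prop:cops_tree_cycle} (giving $\thcx(C_n)=\psdthx(C_n)$) together with the explicit formula for $\psdthx(C_n)$ from \thref{thm:cycles}. No additional argument is supplied in the paper, and none is needed here.
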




Let $G$ be a graph. The \textit{complement} of a $G$ is denoted $\overline{G}$, and is defined by $V \lp \overline{G} \rp   = V \lp G \rp  $ and $E \lp \overline{G} \rp   = \{uv: uv \not \in E \lp G \rp  \}$. In \cite{book}, Hogben, Lin, and Shader show that when $n\geq 5$, $\psdz \lp \overline{C_n} \rp   = n-3, \psdpt \lp \overline{C_n} \rp   = 2$ if $n \neq 6$  and $ \psdpt\lp \overline{C_6} \rp   = 1$, $\psdz \lp \overline{P_n} \rp   = n-3$, and $\psdpt \lp \overline{P_n} \rp   = 2$. We use this knowledge for the next proposition.

\begin{prop} Let $n\geq 5$. Then
    \begin{enumerate}
     \item $\psdths \lp \overline{C_n} \rp   = \begin{cases} n-3 & \text{ if $n =6$}  \\ n-2 & \text{ if $n \neq 6$} \end{cases}$
     \item $\psdths \lp \overline{P_n} \rp   = n-2$.
\end{enumerate}
\end{prop}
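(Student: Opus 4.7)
My plan is to compute, for each $G \in \{\overline{C_n}, \overline{P_n}\}$, the quantity
\[
\psdths(G) \;=\; \min_{\psdz(G) \le k < |V(G)|} k \cdot \psdpt(G, k).
\]
Since $\psdz(\overline{C_n}) = \psdz(\overline{P_n}) = n-3$ by the cited results, the only values of $k$ to inspect are $n-3, n-2, n-1$. I would first dispose of $\overline{C_6}$: since $\psdpt(\overline{C_6}) = 1$, \thref{obs:PSDproptimeisone} gives $\psdths(\overline{C_6}) = \psdz(\overline{C_6}) = 3$, matching the claim.

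For the remaining cases I would tabulate the three quantities. At $k = n-3$, the cited $\psdpt(G) = 2$ gives $k \cdot \psdpt(G,k) = 2(n-3) = 2n-6$. At $k = n-1$, the single white vertex is forced in one step (it has a neighbor in the blue set, since $G$ is connected), so $k \cdot \psdpt(G,k) = n-1$. The critical piece is $k = n-2$: I must exhibit a PSD forcing set $S$ of size $n-2$ achieving $\psdpts{G}{S} = 1$, which combined with the trivial lower bound $\psdpts{G}{S}\geq 1$ yields exactly $n-2$. The numerical comparison $\min\{2n-6,\, n-2,\, n-1\} = n-2$ then holds whenever $n \ge 4$, completing the proof.

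The substantive step is the explicit construction of a size $n-2$ set with propagation time $1$. For $\overline{C_n}$ with $n \ge 5$, I take $S = V \setminus \{1, 3\}$ (cyclic labels). Since $1$ and $3$ are non-adjacent in $C_n$, they are adjacent in $\overline{C_n}$, so $\{1, 3\}$ is the sole component of $G - S$. Vertex $4 \in S$ is adjacent in $\overline{C_n}$ to $1$ but not to $3$, so $4$ forces $1$; and vertex $n \in S$ (distinct from $4$ since $n \ge 5$) is adjacent to $3$ but not to $1$, so $n$ forces $3$. Both forces fire in round $1$. For $\overline{P_n}$ the analogous choice $\{1, 3\}$ fails because vertex $1$ has only one non-neighbor in $\overline{P_n}$ (namely $2$), leaving no blue vertex able to single out either $1$ or $3$ as a unique white neighbor. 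Instead I take $S = V \setminus \{2, 4\}$: then $\{2, 4\}$ is a single component (adjacent in $\overline{P_n}$), vertex $5 \in S$ is adjacent to $2$ but not to $4$ and so forces $2$, and vertex $1 \in S$ is adjacent to $4$ but not to $2$ and so forces $4$.

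The main obstacle I anticipate is precisely this adaptation from $\overline{C_n}$ to $\overline{P_n}$: endpoints of a path have only one non-neighbor in the complement, so the two removed vertices must be interior and far enough from the boundary to leave independent witnesses in $S$ for each. The pair $\{2, 4\}$ is essentially the minimal such choice. Once this small technicality is handled, every remaining step is a routine check of adjacencies in $\overline{C_n}$ or $\overline{P_n}$ together with the elementary inequality $2n - 6 \ge n - 2$ for $n \ge 4$.
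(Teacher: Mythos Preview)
Your argument is correct and follows the same strategy as the paper: exhibit a PSD forcing set of size $n-2$ with propagation time $1$, then compare against $k=n-3$ and $k=n-1$ (indeed, you spell out the $\overline{P_n}$ case and the numerical comparison more fully than the paper's proof does). The one difference is the choice of omitted pair: the paper leaves $\{v_{n-1},v_n\}$ white, and since these are adjacent in $C_n$ (or $P_n$) they are \emph{non}-adjacent in the complement, so each white vertex is its own component of $G-S$ and is trivially forced in one round---a slightly cleaner choice than your adjacent-in-the-complement pairs $\{1,3\}$ and $\{2,4\}$.
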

\begin{proof}
     For the first statement, since $\psdpt \lp \overline{C_6} \rp   = 1$ and $\psdz \lp \overline{C_6} \rp   = 3$, $\psdths \lp \overline{C_6} \rp   = 3$. If $n \neq 6$, then $\psdpt \lp \overline{C_n} \rp   = 2$ and $\psdz \lp \overline{C_n} \rp   = n-3$. Let $V \lp \overline{C_n} \rp   = \{v_1,\ldots,v_n\}$ and $E \lp \overline{C_n} \rp   = \{v_iv_j: |i-j| \not \equiv  1 \mod n \}\setminus\{v_1v_n\}.$ Let $S = \{v_1,\ldots, v_{n-2}\}$. Then $\psdpts{\overline{C_n}}{S} = 1$.  \qedhere 
        
   
\end{proof}

Table \ref{table:graphfamilies} contains values for the initial value positive semidefinite product throttling number and non-initial value positive semidefinite product throttling for specific families of graphs determined by theorems that we have presented thus far. In the table, the symbol ? represents that the value is currently unkown for the parameter. 

\begin{table}[ht]
{
    
    \begin{tabular}{|| c | c | c | c | c ||}
        \hline
         $G$   &$\psdz(G)$ &$\psdpt(G)$ &$\psdthx(G)$& $\psdths(G)$ \\
         \hline \hline
         & & & & \\
         $K_n$ & $n-1$ & $1$ & $n$ & $n-1$ \\
         & & & & \\
         \hline 
         & & & & \\
         $C_n$ & $2$ &$\lc\frac{n-2}{2} \rc$ & $\begin{cases}
            3\lp  1+\lc \frac{n-3}{6}\rc \rp & \scriptstyle{ n = 12i+3, i\geq 1} \\
            2\lp  1+\lc \frac{n-2}{4}\rc \rp & \scriptstyle{\text{ otherwise.} }
            \end{cases}$ & $\lc\frac{n}{3}\rc$\\
         & & & & \\
          \hline 
          & & & & \\
          $K_{s,t}$ &$\min(s,t)$ & $1$ & $\min\{2t, 2s\}$ &$\min\{s,t\}$ \\
         
          & & & & \\
          \hline 
          & & & & \\
          Any tree $T$ & $1$ &$\rad(T)$ &$1 + \rad(T)$  & ?\\
          & & & & \\
          \hline 
          & & & & \\
          $P_n$ & $1$ &$\lc\frac{n-1}{2}\rc$ &$1 +\lc\frac{n-1}{2}\rc$  & $\lc\frac{n}{3}\rc$\\
          & & & & \\
          \hline 
          & & & & \\
          $Q^d$ & $2^{d-1}$ & $1$ &  $2^d$ & $2^{d-1}$\\
          & & & & \\
          \hline 
          & & & & \\
           $K_{n_1,\ldots,n_k}$ & & & & \\ {$\scriptstyle{n_1<\dots <n_k}$} & $n_1 + \cdots + n_{k-1}$&$1$  & $\min\{n,2(n_1+\dots +n_{k-1})\}$  & $n_1+\dots +n_{k-1}$\\
           & & & & \\
           \hline
          & & & & \\
          $\overline{C_n}, n\geq 5$ & $n-3$ & $\begin{cases} 2 & \scriptstyle{\text{ if }  n\neq 6} \\ 1  & \scriptstyle{\text{ if } n= 6 }\end{cases}$ & $n$ &  $\begin{cases}  n-2 & \scriptstyle{\text{ if } n \neq 6} \\ n-3 & \scriptstyle{\text{ if } n =6 } \end{cases}$ \\
          & & & & \\
          \hline
          & & & & \\
          $\overline{P_n}, n\geq 5$ & $n-3$ & $2$ & $n$ &  $n-2$\\
          & & & & \\
          \hline
    \end{tabular}
}
\caption{Graph Families. Results for the graph families that are not proved here can be found in \cite{book}.}\label{table:graphfamilies}
\end{table}

\clearpage
      
      \bibliography{bib.bib}

\end{document}